\numberwithin{equation}{section}
\newtheorem{theorem}{Theorem}[section]
\newtheorem{lemma}[theorem]{Lemma}
\newtheorem{definition}[theorem]{Definition}
\newtheorem{proposition}[theorem]{Proposition}
\newtheorem{remark}[theorem]{Remark}
\newcommand{ \mint }{ {\int\hspace{-0.38cm}-}}
\begin{document}
	
\title[\hfil Weak Harnack estimates for nonlocal Trudinger equation] {Weak Harnack estimates for a doubly nonlinear nonlocal $p$-Laplace equation}

\author[B. Shang, C. Zhang \hfil \hfilneg]
{Bin Shang and Chao Zhang$^*$}

\thanks{$^*$Corresponding author.}

\address{Bin Shang \hfill\break
	School of Mathematics, Harbin Institute of Technology,
	Harbin 150001, P.R. China} \email{shangbin0521@163.com}

\address{Chao Zhang\hfill\break
	School of Mathematics and Institute for Advanced Study in Mathematics, Harbin Institute of Technology,
	Harbin 150001, P.R. China} \email{czhangmath@hit.edu.cn}

\subjclass[2020]{35B65, 35R11, 47G20, 35K92}
\keywords{Expansion of positivity; Weak Harnack inequality; Doubly nonlinear nonlocal $p$-Laplace equations}

\maketitle

\begin{abstract}
We establish a new type of weak Harnack estimates with optimal parabolic tail for the weak supersolutions to a doubly nonlinear nonlocal $p$-Laplace equation, which is modeled on the nonlocal
Trudinger equation. Our results are achieved by employing the expansion of positivity and measure theoretical techniques. In particular, the weak Harnack estimates highlight the nonlocal feature, as we only require the local positivity of weak supersolutions instead of the global one.
\end{abstract}

\section{Introduction}
\label{sec1}
\par

Let $E\subset\mathbb{R}^N$ be an open set and $T>0$. In this paper, we investigate the nonnegative weak supersolutions to the following doubly nonlinear nonlocal $p$-Laplace equations
\begin{align}
\label{1.1}
\partial_t(|u|^{p-2}u)+\mathcal{L}u=0\quad\text{in } E_T=E\times(0,T],
\end{align}
with $p>1$ and $\mathcal{L}$ being the nonlocal $p$-Laplace operator defined by
\begin{align*}
\mathcal{L} u(x,t)=\mathrm{P.V.} \int_{\mathbb{R}^N} K(x,y,t)|u(x,t)-u(y,t)|^{p-2}(u(x,t)-u(y,t))\,dy.
\end{align*}
The notation P.V. typically denotes the Cauchy principal value and the kernel function  $K(x, y, t):\mathbb{R}^N\times\mathbb{R}^N\times (0, T]\rightarrow [0,\infty)$ fulfills that
\begin{align*}
\frac{\Lambda^{-1}}{|x-y|^{N+sp}} \leq K(x,y,t)\equiv K(y,x,t) \leq \frac{\Lambda}{|x-y|^{N+sp}}\quad \mathrm{a.e. }\ x,y\in\mathbb{R}^N,
\end{align*}
for some $s\in(0,1)$ and $\Lambda \geq 1$. When $K(x,y,t)=\frac{1}{|x-y|^{N+sp}}$, Eq. \eqref{1.1} is nothing but the nonlocal
Trudinger equation
$$
\partial_t(|u|^{p-2}u)+(-\Delta)_p^s u=0,
$$
and it is the standard nonlocal heat equation if $p=2$.

\subsection{Function spaces and definitions}

For $p>1$ and $s\in(0,1)$, we introduce the fractional Sobolev space $W^{s,p}(\mathbb{R}^N)$ as
\begin{align*}
W^{s,p}\left(\mathbb{R}^N\right):=\left\{v \in L^p\left(\mathbb{R}^N\right): \int_{\mathbb{R}^N} \int_{\mathbb{R}^N} \frac{|v(x)-v(y)|^p}{|x-y|^{N+s p}}\,dxdy <\infty\right\},
\end{align*}
endowed with the norm
\begin{align*}
\|v\|_{W^{s,p}\left(\mathbb{R}^N\right)}:=\left(\int_{\mathbb{R}^N}|v|^p\,dx \right)^{\frac{1}{p}}+\left(\int_{\mathbb{R}^N} \int_{\mathbb{R}^N} \frac{|v(x)-v(y)|^p}{|x-y|^{N+sp}}\,dxdy\right)^{\frac{1}{p}}.
\end{align*}
In general, the above definitions also apply to the fractional Sobolev space $W^{s,p}(E)$ when the domain $E\subset \mathbb{R}^N$, and we define the corresponding seminorm as
\begin{align*}
[v]_{W^{s,p}(E)}=\left(\int_E \int_E \frac{|v(x)-v(y)|^p}{|x-y|^{N+sp}}\,dxdy \right)^{\frac{1}{p}}.
\end{align*}
The fractional Sobolev space with zero boundary values is defined as 
\begin{align*}
W_0^{s,p}(E):=\left\{v \in W^{s, p}\left(\mathbb{R}^N\right): v=0 \text { a.e. in } \mathbb{R}^N \backslash E\right\}.
\end{align*}

We also recall the tail space
\begin{align*}
L_{sp}^{p-1}(\mathbb{R}^{N}):=\left\{v \in L_{\rm{loc}}^{p-1}(\mathbb{R}^{N}): \int_{\mathbb{R}^N} \frac{|v(x)|^{p-1}}{1+|x|^{N+{sp}}}\,dx<+\infty\right\}, \quad p>1 \text{ and } \alpha>0,
\end{align*}
and the nonlocal parabolic tail is given by
\begin{align}
\label{1.2}
\mathrm{Tail}(v;x_0,R;t_0-S,t_0):=\left[\mint_{t_0-S}^{t_0}\left(R^{sp}\int_{\mathbb{R}^{N} \backslash K_R(x_0)} \frac{|v(x, t)|^{p-1}}{|x-x_0|^{N+sp}}\,dx\right)^{\frac{q}{p-1}}\,dt\right]^{\frac{1}{q}}
\end{align}
for some $q>p-1$. Moreover, we denote
\begin{align*}
\widetilde{\mathrm{Tail}}(v;x_0,R;t_0-S,t_0):=\left[\int_{t_0-S}^{t_0}\left(\int_{\mathbb{R}^{N} \backslash K_R(x_0)} \frac{|v(x, t)|^{p-1}}{|x-x_0|^{N+sp}}\,dx\right)^{\frac{q}{p-1}}\,dt\right]^{\frac{1}{q}}.
\end{align*}
It is apparent that $\mathrm{Tail}(v;x_0,R;t_0-S,t_0)$ is well-defined whenever $v\in L^q(t_0-S,t_0;L_{sp}^{p-1}(\mathbb{R}^N))$.

\medskip

We now give the definition of weak solutions that we work with.
\begin{definition}
	A function 
	\begin{align*}
	u\in C_{\mathrm{loc}}(0,T; L^p_{\mathrm{loc}}(E))\cap L^p_{\mathrm{loc}}(0,T;W^{s,p}_{\mathrm{loc}}(E))\cap L^{p-1}_{\mathrm{loc}}(0,T;L_{sp}^{p-1}(\mathbb{R}^N))
	\end{align*}
	is termed a local weak sub(super-) solution to \eqref{1.1} if for any compact set $K\subset E$ and any closed interval $[t_1,t_2]\subset (0,T]$, the following inequality holds
	\begin{align*}
	%\label{1.3}
	\int_K |u|^{p-2}u\varphi\,dx\Big|_{t_1}^{t_2}+\int_{t_1}^{t_2}\int_{K}-|u|^{p-2} u \partial_t \varphi\,dxdt+\int_{t_1}^{t_2} \mathcal{E}(u, \varphi, t)\,dt \leq (\geq) 0,
	\end{align*}
	for each nonnegative test function $\varphi \in W_{\mathrm{loc}}^{1,p}(0,T;L^p(K))\cap L^p_{\mathrm{loc}}(0,T;W_0^{s,p}(K))$, where
	\begin{align*}
	\mathcal{E}(u,\varphi,t):=\int_{\mathbb{R}^N}\int_{\mathbb{R}^N} K(x,y,t)|u(x,t)-u(y,t)|^{p-2}(u(x,t)-u(y,t))(\varphi(x,t)-\varphi(y,t))\,dydx.
	\end{align*}
\end{definition}
A function $u$ is said to be a weak solution provided that $u$ is both a local weak supersolution and a local weak subsolution. 
\subsection{Overview of related literature}

Eq. \eqref{1.1} can be regarded as a nonlocal version of the famous Trudinger equation
\begin{align}
\label{1.4}
\partial_t(|u|^{p-1}u)-\mathrm{div}(|\nabla u|^{p-2}\nabla u )=0,\quad p>1,
\end{align}
which is commonly classified as a more general class called doubly nonlinear parabolic equations, characterized by the nonlinearity present in both the time derivative and spatial gradient terms. The doubly nonlinear parabolic equations are of considerable interest because the mathematical structure encompasses mixed cases of degeneracy/singularity as well as their widespread applications, such as in shallow water flows \cite{ASD08}, the dynamics of glaciers \cite{M76}, and the model of flow in a gas-network dominated by friction \cite{LM18}. Further details on these models can be found in \cite{BDG23}. For the study of Harnack's estimates in doubly nonlinear equations, some significant results have been achieved. We refer the readers to \cite{B24, BS24,L18, V94,VV22, BHSS21,RSZ24, BDG23} and reference therein. The homogeneous form of doubly nonlinear equations is Eq. \eqref{1.4}, first introduced by Trudinger \cite{T68} in the study of a Harnack inequality for nonnegative weak solutions, utilizing Moser's iteration. This work was later extended to Trudinger's equation with more general operators in \cite{GV06} by proving mean value inequalities and incorporating Moser's logarithmic estimates. For what concerns the problem with a doubling Borel measure, Kinnunen-Kuusi \cite{KK07} developed a relatively simple approach to derive Harnack's estimates. With this result and the intrinsic scaling method, the H\"{o}lder continuity of nonnegative weak solutions to \eqref{1.4} was subsequently investigated in \cite{KSU12} for the degenerate case and in \cite{KSLU12} for the singular case. Alternatively, B\"{o}gelein-Duzaar-Liao \cite{BDL21} employed the expansion of positivity to study the interior and boundary regularity of weak solutions to \eqref{1.4} without any sign restriction. See also \cite{BDKS20, S10, LL22} for more related works.

Coming to the linear framework of \eqref{1.1} with $p=2$, namely
\begin{align}
\label{1.5}
\partial_t u-\mathrm{P.V.}\int_{\mathbb{R}^N} K(x,y,t)(u(x,t)-u(y,t))\,dy=0,
\end{align}
a weak Harnack inequality for globally positive solutions was proved in \cite{FK13}  and \cite{KS14}, under different conditions on the kernel function $K(x,y,t)$. Afterward, Str\"{o}mqvist \cite{S19} removed the requirement of global positivity for weak solutions, leading to the nonlocal version of the Harnack inequality; see \cite{K19} for the results concerning \eqref{1.5} with continuous data. Regarding the jumping kernels are not symmetric, a weak Harnack inequality and a Harnack inequality including the tail term were presented in \cite{KW22a} and \cite{KW22b}, respectively. For the linear mixed local and nonlocal parabolic equations, Garain-Kinnunen \cite{GK23} obtained a weak Harnack inequality for sign-changing supersolutions through a purely analytical argument. It is worth mentioning that the tail terms in the aforementioned results are bounded with respect to the time variable. Very recently, Kassmann-Weidner \cite{KW23} proposed a more natural definition of the tail in the parabolic setting as follows
%$L^1$-tail, defined as
\begin{align}
\label{1.6}
\int_{t_0-R^{2s}}^{t_0} \operatorname{Tail}(v(t);R,x_0)\,dt=\int_{t_0-R^{2s}}^{t_0} \int_{\mathbb{R}^N \backslash K_R\left(x_0\right)} \frac{|v(x,t)|}{\left|x-x_0\right|^{N+2s}}\,dxdt,
\end{align}
%which is more natural in the parabolic setting, 
and the improved local boundedness for subsolutions as well as a weak Harnack inequality for supersolutions with this $L^1$-tail were proved, which together imply the Harnack estimate. Moreover, the authors demonstrated the H\"{o}lder continuity for weak solutions under a hypothesis that the tail belongs to $L^{1+\varepsilon}$ in time and provided an example to show this assumption is optimal. Based on the tail definition \eqref{1.6}, Liao \cite{L24} derived several novel weak Harnack estimates of weak supersolutions to \eqref{1.5} relying on the measure theoretical approach that captured some nonlocal features previously overlooked, even in the context of elliptic problems. Besides, under certain integrability conditions on the time variable of the parabolic tail, the regularity results of weak solutions to the nonlocal $p$-Laplace parabolic equations were established in \cite{BK24} and \cite{Li24}.

Let us turn to the nonlocal Trudinger type equation \eqref{1.1}. As far as we know, the first contribution to the regularity theory was due to Banerjee-Garain-Kinnunen \cite{BGK22}. They obtained local boundedness estimates for the range $2<p<\infty$ and established a valuable algebraic inequality to treat the nonlocal term in deriving a reverse H\"{o}lder inequality. Building on this algebraic inequality and the expansion of positivity, Nakamura \cite{N23} discussed a Harnack inequality for the doubly nonlinear parabolic equation in the mixed local and nonlocal scenario.  Prasad \cite{P24} proved a weak Harnack estimate by imposing the global nonnegativity assumption on the weak solutions to \eqref{1.1}. For certain local properties, including the lower semicontinuity and pointwise estimates of supersolutions to general doubly nonlinear nonlocal parabolic equations, please refer to \cite{BGK23}.

Note that the form of the weak Harnack estimate obtained in \cite{P24} is analogous to the one of the local Trudinger equation, owing to the assumption that weak supersolutions are globally nonnegative. To emphasize the role of nonlocal operators, our work aims to establish a new type of weak Harnack estimates for locally positive weak supersolutions to \eqref{1.1}, under a regularity requirement on the tail with respect to the time variable. Consequently, our results will incorporate an optimal tail term, which can better exhibit the nonlocal features. Inspired by \cite{BK24, L24}, we employ the measure theoretical method and a suitable De Giorgi iteration to address the difficulties posed by the nonlocal term. It is remarked that the main idea is to derive an energy estimate for weak solutions to \eqref{1.1}, with the remainder of the proof in this work only relying on such energy estimates and selecting appropriate test functions, without using the equation anymore. In addition, we also establish a weak Harnack inequality in the time-space cylinders with a higher integral exponent than the known results.

\subsection{Main results}
Throughout the paper, let $\mathscr{Q}:=K_R(x_0)\times(T_1,T_2]$ denote a reference cylinder included in $E_T$. 

\smallskip
We state our first main result about a weak Harnack inequality as follows.
\begin{theorem}
\label{thm-1-2}
Let $p>1$ and $u$ be a local weak supersolution to \eqref{1.1} that satisfies $u\geq 0$ in $\mathscr{Q}$. Assume further that $u\in L^q_{\rm{loc}}(0,T;L^{sp}_{p-1}(\mathbb{R}^N))$ with $q>p-1$. Suppose $K_{4\rho}(x_0)\times(t_0,t_0+4(8\rho)^{sp}]\subset \mathscr{Q}$. Then there exist constants $\vartheta,\eta$ in $(0,1)$ depending only on $N,p,s,q,\Lambda$, such that
\begin{align*}
&\quad\left(\frac{\rho}{R}\right)^{\frac{sp}{p-1}}\mathrm{Tail}(u_-;x_0,R;t_0,t_0+4(8\rho)^{sp})+\operatorname*{ess\inf}_{K_{2\rho}(x_0)} u(\cdot,t)\\
&\geq \eta\left(\mint_{K_\rho(x_0)} u^\beta(\cdot,t_0)\,dx\right)^{\frac{1}{\beta}}
\end{align*}
for all times
\begin{align*}
t\in \left(t_0+\frac{1}{2}(8\rho)^{sp},t_0+ 2(8\rho)^{sp}\right].
\end{align*}
\end{theorem}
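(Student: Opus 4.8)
The plan is to follow the by-now standard two-pillar strategy for weak Harnack estimates — an expansion of positivity combined with a measure-to-pointwise estimate — but carried out entirely at the level of energy estimates so that the nonlocal tail is tracked quantitatively throughout. First I would record the basic Caccioppoli/energy inequality for the truncations $(u-k)_-$ on cylinders contained in $\mathscr{Q}$, paying attention to the contribution of the nonlocal term: away from $K_R(x_0)$, where no sign information on $u$ is available, this contribution is exactly what produces the $\mathrm{Tail}(u_-;x_0,R;\cdot)$ factor, with the scaling exponent $sp/(p-1)$ appearing through the homogeneity of the operator and the $|u|^{p-2}u$ time term. Because the time term is $\partial_t(|u|^{p-2}u)$ rather than $\partial_t u$, I would use the standard change of variables / Boccardo–Dall'Aglio-type algebraic inequalities to convert between $u$ and $|u|^{p-2}u$ in the energy estimate; in the degenerate range $p>2$ and the singular range $1<p<2$ this requires separate but routine bookkeeping.

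The core of the argument is a \emph{measure propagation} step: starting from the quantitative information that $u(\cdot,t_0)$ has a not-too-small $L^\beta$-average on $K_\rho(x_0)$ — equivalently, $\{u(\cdot,t_0)\ge \lambda\}$ occupies a definite fraction of $K_\rho(x_0)$ for a suitable $\lambda$ comparable to the right-hand side — I would first spread this positivity in time. That is, I would show that for some $\delta,\epsilon\in(0,1)$ depending only on the data, the set $\{u(\cdot,t)\ge \epsilon\lambda\}$ still occupies a definite fraction of $K_\rho(x_0)$ for all $t\in(t_0,t_0+\delta(8\rho)^{sp}]$; this is the De Giorgi-type "shrinking lemma in time" obtained by testing the energy inequality with the relevant truncation and using the tail hypothesis $u\in L^q_{\mathrm{loc}}(0,T;L^{sp}_{p-1})$ to control the long-range interactions over the whole time interval. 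The second stage is the genuine \emph{expansion of positivity} in space: from a positivity set of definite measure inside $K_\rho$ at the later times one concludes a pointwise lower bound $u\ge \eta_0\epsilon\lambda$ on $K_{2\rho}(x_0)$ throughout a subinterval of $(t_0+\tfrac12(8\rho)^{sp},t_0+2(8\rho)^{sp}]$, again by a De Giorgi iteration on a geometric sequence of truncation levels and shrinking cylinders, with the nonlocal tail contributions at each iteration step absorbed into the correction term $(\rho/R)^{sp/(p-1)}\mathrm{Tail}(u_-;x_0,R;\cdot)$. Combining the stages and relabelling constants yields the stated inequality, with $\operatorname{ess\,inf}_{K_{2\rho}}u(\cdot,t)$ on the left because the pointwise bound may fail on a small exceptional set absorbed by the tail term.

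Two technical points deserve care. One is the exponent $\beta$: it must be chosen in the range dictated by the energy estimate (in particular $\beta<p-1$ so that $u^\beta$ is integrable against the natural quantities, with $\beta$ ultimately any admissible value produced by the iteration), and the passage from the $L^\beta$-average to a measure lower bound uses a reverse-Hölder / density argument that I would make explicit once. The other, and I expect this to be the \textbf{main obstacle}, is handling the nonlocal tail \emph{uniformly across all the De Giorgi iteration steps}: at each level $k_j\searrow 0$ one picks up a term of the form $k_j^{-(p-1)}\,\mathrm{Tail}(u_-;x_0,\rho_j;\cdot)^{p-1}$, and one must show these are summable and collectively dominated by a single copy of $(\rho/R)^{sp/(p-1)}\mathrm{Tail}(u_-;x_0,R;\cdot)$ — this is where the gap between $\rho$ and $R$, the monotonicity of tails under shrinking radii, and the choice $q>p-1$ (which gives the needed room in the time integrability to apply Hölder) all enter, and it is the step that distinguishes the "local positivity" statement here from the global-positivity result of \cite{P24}. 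The remaining estimates (Sobolev embedding in $W^{s,p}$, the fast-geometric-convergence lemma, interpolation in time) are standard and I would invoke them without detailed computation.
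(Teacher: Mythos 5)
Your outline of the machinery --- the Caccioppoli estimate carrying the tail term, the propagation of a measure density condition forward in time, the De Giorgi iteration with the tail absorbed at each step, and the expansion of positivity from a measure condition at time $t_0$ to a pointwise lower bound on $K_{2\rho}$ at later times --- is exactly what the paper assembles in Sections 3 and 4 and packages as Proposition \ref{pro-4-1}. The gap is in how you propose to launch that machinery from the hypothesis of the theorem. You assert that a lower bound on $\bigl(\mint_{K_\rho}u^{\beta}(\cdot,t_0)\,dx\bigr)^{1/\beta}$ is ``equivalent'' to $\{u(\cdot,t_0)\ge\lambda\}$ occupying a \emph{definite} fraction of $K_\rho$ for some $\lambda$ comparable to that average. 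That implication is false for supersolutions, which need not be bounded above: the $L^{\beta}$ mass can concentrate on a set of arbitrarily small measure, so no single level $M$ with a universally bounded-below density fraction $\alpha$ need exist, and the ``reverse-H\"older / density argument'' you defer cannot be supplied. Applying the expansion of positivity at one level with a degenerating $\alpha$ only returns a lower bound proportional to $\bar{\eta}\alpha^{\vartheta}M$, which is useless without controlling how $\alpha$ and $M$ trade off against each other.

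The paper's proof resolves this by running the argument at \emph{every} level simultaneously: it writes $\int_{K_\rho}u^{\beta}(\cdot,t_0)\,dx=\beta\int_0^{\infty}|\{u(\cdot,t_0)>M\}\cap K_\rho|\,M^{\beta-1}\,dM$ and applies Proposition \ref{pro-4-1} with $\alpha=\alpha(M)=|\{u(\cdot,t_0)>M\}\cap K_\rho|/|K_\rho|$; the quantitative dependence $\eta=\bar{\eta}\alpha^{\vartheta}$ then yields $|\{u(\cdot,t_0)>M\}\cap K_\rho|\le(\mu/(\bar{\eta}M))^{1/\vartheta}|K_\rho|$ for every $M\ge\mu$, where $\mu$ is the left-hand side of the asserted inequality. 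Integrating this bound in $M$ finishes the proof, and the convergence of $\int_{\mu}^{\infty}M^{\beta-1/\vartheta-1}\,dM$ is precisely what fixes the admissible range of $\beta$: one needs $\beta<1/\vartheta$, with $\vartheta$ the exponent from the expansion of positivity --- not the condition $\beta<p-1$ you propose, which plays no role here. Without this layer-cake decomposition and the interplay between $\beta$ and the $\alpha$-dependence of the positivity expansion, your argument cannot be completed as written.
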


The upcoming statement is a weak Harnack estimate that improves upon the integral power of the first one, with its form presented in the time-space cylinders.

\begin{theorem}
\label{thm-1-3}
Let $p>1$ and $u$ be a local weak supersolution to \eqref{1.1} that satisfies $u\geq 0$ in $\mathscr{Q}$. Assume further that $u\in L^q_{\rm{loc}}(0,T;L^{sp}_{p-1}(\mathbb{R}^N))$ with $q>p-1$. Suppose $K_{2\rho}(x_0)\times(t_0-(2\rho)^{sp},t_0+4(4\rho)^{sp}]\subset \mathscr{Q}$.   Then there exists a constant $\eta\in(0,1)$ depending only on $N,p,s,q,\Lambda$, such that
\begin{align*}
&\quad \left(\frac{\rho}{R}\right)^{\frac{sp}{p-1}}\mathrm{Tail}(u_-;x_0,R;t_0-(2\rho)^{sp},t_0+4(4\rho)^{sp})+\operatorname*{ess\inf}_{K_{\rho}(x_0)} u(\cdot,t)\\
&\geq \eta\left(\mint\!\!\mint_{K_\rho(x_0)\times(t_0-\rho^{sp},t_0)} u^{p-1}\,dxdt\right)^{\frac{1}{p-1}}
\end{align*}
for all times
\begin{align*}
t\in \left(t_0+\frac{3}{4}(4\rho)^{sp},t_0+ (4\rho)^{sp}\right].
\end{align*}
\end{theorem}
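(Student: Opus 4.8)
The plan is to derive Theorem~\ref{thm-1-3} from Theorem~\ref{thm-1-2} by converting the single-time, low-exponent weak Harnack into a space-time estimate at the natural exponent $p-1$, using the quantitative expansion of positivity (produced in this theory from an energy estimate via a De~Giorgi iteration) together with a measure-theoretic layer-cake argument and a final average in time. Fix $t\in(t_0+\tfrac34(4\rho)^{sp},t_0+(4\rho)^{sp}]$, set $Q^-:=K_\rho(x_0)\times(t_0-\rho^{sp},t_0)$, and abbreviate the right-hand side of Theorem~\ref{thm-1-3}:
\begin{align*}
\mathbf J:=\Big(\tfrac{\rho}{R}\Big)^{\frac{sp}{p-1}}\mathrm{Tail}\big(u_-;x_0,R;t_0-(2\rho)^{sp},t_0+4(4\rho)^{sp}\big)+\operatorname*{ess\inf}_{K_\rho(x_0)}u(\cdot,t).
\end{align*}
We may assume $\mathbf J<\infty$; the claim reduces to $\mint\!\!\mint_{Q^-}u^{p-1}\,dxdt\le C\,\mathbf J^{\,p-1}$ with $C=C(N,p,s,q,\Lambda)$.

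\emph{Step 1 (a pointwise-in-time measure-to-infimum estimate).} I would first establish a nondecreasing modulus $\phi:(0,1]\to(0,\infty)$, controlled by $N,p,s,q,\Lambda$, such that for a.e.\ base time $\tau\in(t_0-\rho^{sp},t_0)$ and all $\lambda>0$, $\alpha\in(0,1]$,
\begin{align*}
\big|\{x\in K_\rho(x_0):u(x,\tau)>\lambda\}\big|\ge\alpha\,|K_\rho(x_0)|\quad\Longrightarrow\quad\mathbf J\ge\phi(\alpha)\,\lambda.
\end{align*}
This is obtained by feeding a measure-density of the super-level set of $u(\cdot,\tau)$ into Theorem~\ref{thm-1-2} and the expansion-of-positivity estimates at spatial scale $\tfrac\rho2$ about $x_0$ (and, after a fixed finite covering, about translated centers), which promote it to a pointwise lower bound for $u$ on $K_\rho(x_0)$ at a later time, modulo the tail. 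The numerology closes thanks to the margins in the hypothesis $K_{2\rho}(x_0)\times(t_0-(2\rho)^{sp},t_0+4(4\rho)^{sp}]\subset\mathscr Q$: for every such $\tau$ one has $K_{2\rho}(x_0)\times(\tau,\tau+4(4\rho)^{sp}]\subset\mathscr Q$ — precisely the inclusion required to invoke Theorem~\ref{thm-1-2} at scale $\tfrac\rho2$ — and since $(4\rho)^{sp}\ge\rho^{sp}$ the admissible interval of its conclusion contains our fixed $t$. Because $u\ge0$ on \emph{all} of $K_R(x_0)$, the near-field part of $\mathrm{Tail}(u_-;\cdot)$ vanishes, so each tail that appears (at the various centers, at radii $\le R$, over time windows inside $(t_0-(2\rho)^{sp},t_0+4(4\rho)^{sp})$) is $\le C\mathbf J$ after routine center- and interval-comparisons.

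\emph{Step 2 (layer-cake and the exponent $p-1$).} Let $A(\lambda)$ be the average over $\tau\in(t_0-\rho^{sp},t_0)$ of the normalized measure $|\{x\in K_\rho(x_0):u(x,\tau)>\lambda\}|/|K_\rho(x_0)|$. Step~1 forces $A(\lambda)\le\min\{1,\psi(\mathbf J/\lambda)\}$ with $\psi(y):=\sup\{\alpha\in(0,1]:\phi(\alpha)\le y\}$, so the layer-cake formula gives
\begin{align*}
\mint\!\!\mint_{Q^-}u^{p-1}\,dxdt=(p-1)\int_0^{\infty}\lambda^{p-2}\,A(\lambda)\,d\lambda\le(p-1)\int_0^{\infty}\lambda^{p-2}\min\Big\{1,\psi\!\Big(\tfrac{\mathbf J}{\lambda}\Big)\Big\}\,d\lambda.
\end{align*}
A power-type modulus $\phi(\alpha)=c\,\alpha^{1/r}$ with $r>p-1$ already bounds the right-hand side by $C\mathbf J^{\,p-1}$: the integral splits at the crossover $\lambda\sim\mathbf J$ into $\int_0^{c\mathbf J}\lambda^{p-2}\,d\lambda$ and $\sim(\mathbf J/c)^{\,r}\int_{c\mathbf J}^{\infty}\lambda^{p-2-r}\,d\lambda$, both finite and $\le C\mathbf J^{\,p-1}$ precisely because $p-2-r<-1$. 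Theorem~\ref{thm-1-2} by itself would only furnish $r=\beta<p-1$, which is too small; to reach the optimal exponent $p-1$ I would run the measure-theoretic scheme of Liao~\cite{L24} adapted to the doubly nonlinear nonlocal setting, iterating the expansion of positivity / De~Giorgi measure-shrinking over a dyadic family of levels and measure fractions, where the exponent $p-1$ natural to the doubly nonlinear structure of \eqref{1.1} emerges; this yields $\mint\!\!\mint_{Q^-}u^{p-1}\,dxdt\le C\mathbf J^{\,p-1}$. As $t$ was arbitrary in the target window, Theorem~\ref{thm-1-3} follows.

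\emph{On the main difficulty.} The crux is Step~1 in the nonlocal framework: one must control the nonlocal term at every stage of the De~Giorgi iteration underlying the expansion of positivity while tracking the tail, so that — using $u\ge0$ on $K_R(x_0)$ — it is finally absorbed into the single optimal term $\mathrm{Tail}(u_-;x_0,R;t_0-(2\rho)^{sp},t_0+4(4\rho)^{sp})$ with the correct prefactor $(\rho/R)^{sp/(p-1)}$. By comparison, the geometric numerology of cube sizes and time levels (for which the generous inclusion hypothesis is calibrated) and the passage from a power-type decay to the exact exponent $p-1$ are routine once the energy estimate and the measure-shrinking lemmas are in hand.
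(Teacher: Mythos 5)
Your overall architecture (energy estimate $\to$ De Giorgi iteration $\to$ expansion of positivity, with the tail absorbed using $u\ge 0$ on $K_R(x_0)$) matches the paper's, and the geometric bookkeeping in your Step 1 is sound. But the decisive step — reaching the exponent $p-1$ — is exactly the point you leave as a black box. As you yourself observe, the measure-to-infimum implication coming from the expansion of positivity has the form $\mathbf J\ge\bar\eta\,\alpha^{\vartheta}\lambda$ with $\vartheta>1$, i.e.\ $\phi(\alpha)=\bar\eta\alpha^{1/r}$ with $r=1/\vartheta<1$, so the layer-cake in Step 2 only closes for exponents $\beta<1/\vartheta$, which is the content of Theorem \ref{thm-1-2} and is in general strictly below $p-1$. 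Saying you would "run the measure-theoretic scheme of Liao adapted to the doubly nonlinear nonlocal setting" does not supply the missing mechanism; iterating the expansion of positivity over dyadic levels and measure fractions does not by itself improve the power $\vartheta$, so Step 2 as written fails.

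The idea the paper actually uses (Lemma \ref{lem-3-6}) is different in kind and bypasses the layer-cake entirely: in the Caccioppoli inequality one retains on the left the positive nonlocal cross term
\begin{align*}
\iint_{Q_\rho}(u-M)_-(y,t)\int_{K_{2\rho}}\frac{(u-M)_+^{p-1}(x,t)}{|x-y|^{N+sp}}\,dx\,dy\,dt ,
\end{align*}
bounds it above by $\gamma M^p|K_{2\rho}|$ (using the tail hypothesis) and below by
$\gamma\,\mathrm{ess\,inf}_{t}\big|\{u(\cdot,t)\le \tfrac12 M\}\cap K_\rho\big|\cdot\big([u^{p-1}]_{Q_\rho}-\gamma M^{p-1}\big)$.
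This yields, at a single well-chosen time $\bar t\in[t_0-(2\rho)^{sp},t_0]$, the sublevel-set bound $|\{u(\cdot,\bar t)\le M\}\cap K_{2\rho}|\le \gamma M^{p-1}[u^{p-1}]_{Q_{2\rho}}^{-1}|K_{2\rho}|$, which becomes smaller than $\nu_0|K_{2\rho}|$ precisely for $M\sim[u^{p-1}]_{Q_{2\rho}}^{1/(p-1)}$ — this is where the exponent $p-1$ enters. One then applies Lemma \ref{lem-3-3} to convert this one-time measure bound into pointwise positivity shortly after $\bar t$, and Proposition \ref{pro-4-1} with $\alpha=1$ to reach the target window; no averaging over levels or base times is needed. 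Without this (or an equivalent) quantitative input tying the sublevel-set measure to $[u^{p-1}]$, your proof does not close.
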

\begin{remark}
\label{rem-1-4}
If we assume $u\geq m>0$ in $\mathbb{R}^N\times(0, T)$, then by following a proof similar to that of Proposition 3.3 in \cite{N23}, the exponent $p-1$ in Theorem \ref{thm-1-3} can be improved up to $\sigma$, where $\sigma<\frac{N+sp}{N}(p-1)$.
\end{remark}
\begin{remark}
It is worth noting that the weak Harnack inequality in Theorem \ref{thm-1-3} does not imply the one in Theorem \ref{thm-1-2}, since Theorem \ref{thm-1-3} requires the solution property for $t<t_0$, whereas Theorem \ref{thm-1-2} does not.
\end{remark}

%The nonlocal tail term can be eliminated in Theorems \ref{thm-1-2} and \ref{thm-1-3} if the weak supersolutions of \eqref{1.1} are globally positive.

The plan of this paper is as follows. In Section \ref{sec2}, we collect some basic notations and auxiliary tools. The energy estimate and several preliminary lemmas will be given in Section \ref{sec3}. Section \ref{sec4} is devoted to showing the expansion of positivity which plays an important role in deriving our main results. Finally, we establish the weak Harnack estimates Theorem \ref{thm-1-2} and Theorem \ref{thm-1-3} in Section \ref{sec5}.

\section{Preliminaries}
\label{sec2}

\subsection{Notation}
We first display some standard notations that will be needed. Let $K_\rho(x_0)$ denote the ball with radius $\rho$ centered at $x_0\in\mathbb{R}^N$. For $(x_0,t_0)\in \mathbb{R}^N\times\mathbb{R}$, the backward cylinders are given by
%\begin{align*}
%(x_0, t_0)+Q_{R,S}:=K_R(x_0) \times\left(t_0-S,t_0\right]
%\end{align*}
%and
\begin{align*}
(x_0, t_0)+Q_\rho(\theta):= K_\rho(x_0)\times(t_0-\theta\rho^{sp},t_0).
\end{align*}
We shall omit $(x_0,t_0)$ from the above symbol when there is no ambiguity. The scaling parameter $\theta$ will also be omitted if $\theta=1$.

For $k\in\mathbb{R}$, let
\begin{align*}
(u-k)_+=\max\{u-k,0\} \quad \textmd{and} \quad  (u-k)_-=\max\{-(u-k),0\}.
\end{align*}
We use the notation
\begin{align*}
d\mu=d\mu(x,y,t)=K(x,y,t)\,dxdy
\end{align*}
and 
\begin{align*}
A_\pm(u;x_0,\rho;k)=\{x\in B_\rho(x_0):(u(x,t)-k)_\pm>0\}.
\end{align*}
%and
%\begin{align*}
%U(x,y,t):=|u(x,t)-u(y,t)|^{p-2}(u(x,t)-u(y,t)).
%\end{align*}
The generic constants will be denoted by $\gamma$ and may differ from line to line.

\subsection{Technical lemmas}

For $a,b\in\mathbb{R}$, we define functions $\bm{h}_\pm$ as
\begin{align}
\label{2.1}
\bm{h}_\pm(a,b):=\pm (p-1) \int_b^a|s|^{p-2}(s-a)_{\pm}\,ds.
\end{align}
It can be easily checked that $\bm{h}_\pm\geq 0$. 

The following lemma is an inequality retrieved from \cite[Lemma 2.2]{AF89} for the case $0<\alpha<1$ and from \cite[inequality (2.4)]{GM86} for the case $\alpha>1$. 
\begin{lemma}
\label{lem-2-1}
For all $\alpha>0$, there exists a constant $\gamma(\alpha)>0$, such that whenever $a,b\in\mathbb{R}$, we have 
\begin{align*}
\frac{1}{\gamma}\left|| b|^{\alpha-1} b-|a|^{\alpha-1} a\right|\leq(|a|+|b|)^{\alpha-1}
|b-a|\leq \gamma\left||b|^{\alpha-1} b-|a|^{\alpha-1} a \right|.
\end{align*}

\end{lemma}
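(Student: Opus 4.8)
The plan is to reduce the two-sided bound to a one-variable estimate by exploiting scaling and symmetry. Both the quantity $(|a|+|b|)^{\alpha-1}|b-a|$ and $\big||b|^{\alpha-1}b-|a|^{\alpha-1}a\big|$ are positively homogeneous of degree $\alpha$ in $(a,b)$, symmetric under $(a,b)\mapsto(b,a)$, and invariant under $(a,b)\mapsto(-a,-b)$ since $s\mapsto|s|^{\alpha-1}s$ is odd; composing the latter two gives invariance under $(a,b)\mapsto(-b,-a)$ as well. Using these symmetries and discarding the trivial case $a=b=0$, it suffices to treat (i) $0\le a\le b$, $b>0$, and (ii) $a\le 0<b$. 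The final constant $\gamma(\alpha)$ will be the maximum of the finitely many constants produced in the two cases.

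In case (i), writing $f(s):=|s|^{\alpha-1}s$ and normalizing by $b$ via $t:=a/b\in[0,1]$, the relevant ratio becomes
\begin{align*}
\frac{f(b)-f(a)}{(|a|+|b|)^{\alpha-1}|b-a|}=\frac{1-t^{\alpha}}{(1+t)^{\alpha-1}(1-t)}=:\Phi(t).
\end{align*}
I would then check that $\Phi$ extends to a continuous, strictly positive function on the closed interval $[0,1]$: the only critical point is $t=1$, a removable $0/0$ singularity where $\tfrac{1-t^\alpha}{1-t}\to\alpha$, so $\Phi(1)=\alpha\,2^{1-\alpha}>0$; also $\Phi(0)=1$ and $\Phi>0$ on $(0,1)$. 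Continuity and positivity on the compact set $[0,1]$ give $0<\min_{[0,1]}\Phi\le\max_{[0,1]}\Phi<\infty$, which is exactly the desired two-sided bound in this case.

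In case (ii), setting $a=-c$ with $c\ge 0$ and using oddness of $f$, one has $f(b)-f(a)=b^{\alpha}+c^{\alpha}$ and $|a|+|b|=|b-a|=b+c$, so the ratio equals $\tfrac{b^\alpha+c^\alpha}{(b+c)^\alpha}=\tau^\alpha+(1-\tau)^\alpha$ with $\tau:=b/(b+c)\in[0,1]$; this is again continuous and strictly positive on $[0,1]$ (value $1$ at the endpoints, $2^{1-\alpha}$ at $\tau=1/2$), hence pinched between two positive constants depending only on $\alpha$ — equivalently the elementary comparison $b^\alpha+c^\alpha\simeq(b+c)^\alpha$ coming from convexity/concavity of $s\mapsto s^\alpha$. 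Taking $\gamma(\alpha)$ to be the largest of the constants from (i) and (ii) finishes the argument.

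Since the estimate is purely algebraic, there is no serious obstacle; the only point requiring care is the behavior on the degenerate loci ($a=b$ in case (i), and $a=b=0$ globally), where the ratios under study are $0/0$ forms. These are handled by observing that the ratios extend continuously, with finite positive values, across those loci, so that the compactness argument applies. If one prefers to avoid even this, the absolutely continuous representation $|b|^{\alpha-1}b-|a|^{\alpha-1}a=\alpha\int_a^b|s|^{\alpha-1}\,ds$ (valid for all $\alpha>0$) reduces everything to comparing $\int_a^b|s|^{\alpha-1}\,ds$ with $(|a|+|b|)^{\alpha-1}|b-a|$, split according to whether the segment with endpoints $a$ and $b$ contains the origin.
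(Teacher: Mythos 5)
Your argument is correct. Note, however, that the paper does not prove this lemma at all: it simply imports it from the literature, citing \cite[Lemma~2.2]{AF89} for $0<\alpha<1$ and \cite[inequality~(2.4)]{GM86} for $\alpha>1$ (the case $\alpha=1$ being trivial). So there is no proof in the paper to compare against step by step; what you have supplied is a self-contained elementary replacement. Your route — exploit degree-$\alpha$ homogeneity and the symmetries $(a,b)\mapsto(b,a)$, $(a,b)\mapsto(-a,-b)$ to reduce to the two normalized cases, then observe that the ratio of the two quantities extends to a continuous, strictly positive function on a compact parameter interval — is a standard and clean way to get such two-sided algebraic comparisons, and you handle the only delicate points correctly: the $0/0$ loci $a=b$ and $a=b=0$ are exactly where both sides vanish, and the limit $\tfrac{1-t^{\alpha}}{1-t}\to\alpha$ makes $\Phi$ continuous and positive at $t=1$, so the compactness argument goes through. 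The computation in case (ii), where the ratio collapses to $\tau^{\alpha}+(1-\tau)^{\alpha}\in[\min\{1,2^{1-\alpha}\},\max\{1,2^{1-\alpha}\}]$, is also right. The only cosmetic caveat is that a compactness argument does not produce an explicit $\gamma(\alpha)$ in case (i) unless you additionally track $\min_{[0,1]}\Phi$ and $\max_{[0,1]}\Phi$ (which you could do, since $\Phi$ is explicit), whereas the cited references give explicit constants; for the purposes of this paper only the existence of $\gamma(\alpha)$ is used, so this is immaterial. Your closing remark that everything reduces to comparing $\alpha\int_a^b|s|^{\alpha-1}\,ds$ with $(|a|+|b|)^{\alpha-1}|b-a|$ is a legitimate alternative and is essentially the mechanism behind the quoted inequalities.
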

By Lemma \ref{lem-2-1}, the following inequality holds.

\begin{lemma} [Lemma 2.2, \cite{BK24}]
\label{lem-2-2}
Let $p\geq 1$. There exists a constant $\gamma(p)>0$, such that whenever $a,b\in\mathbb{R}$, we have
\begin{align*}
\frac{1}{\gamma}(|a|+|b|)^{p-2}(a-b)_{ \pm}^2 \leq \bm{h}_{ \pm}(a, b) \leq \gamma(|a|+|b|)^{p-2}(a-b)_{ \pm}^2.
\end{align*}
\end{lemma}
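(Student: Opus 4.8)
\textbf{Proof proposal for Lemma \ref{lem-2-2}.}

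The plan is to derive both bounds directly from the definition \eqref{2.1} of $\bm{h}_\pm$ together with the pointwise two-sided estimate of Lemma \ref{lem-2-1} applied with the exponent $\alpha=p-1$. I will treat the ``$+$'' case in detail; the ``$-$'' case is entirely symmetric (replace $a,b$ by $-a,-b$ and note $(-a-(-b))_- = (a-b)_+$, while $\bm h_-(-a,-b)=\bm h_+(a,b)$ by an elementary change of variables $s\mapsto -s$ in the integral, so the two estimates are equivalent).

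First I would dispose of the trivial case $a\ge b$, in which $(a-b)_+=0$ and, since the integrand $|s|^{p-2}(s-a)_+$ vanishes for $s\in[b,a]$ (there $s\le a$), we get $\bm h_+(a,b)=0$, so both inequalities hold with equality. So assume $b>a$. Then $\bm h_+(a,b) = (p-1)\int_a^b |s|^{p-2}(s-a)\,ds$, which is manifestly nonnegative. The key step is to bound this integral above and below by a multiple of $(|a|+|b|)^{p-2}(b-a)^2$. For the lower bound, I would restrict the integration to the subinterval $s\in[\frac{a+b}{2},b]$, on which $(s-a)\ge \frac{b-a}{2}$, giving $\bm h_+(a,b)\ge (p-1)\frac{b-a}{2}\int_{(a+b)/2}^{b}|s|^{p-2}\,ds = \frac{b-a}{2}\bigl(|b|^{p-2}b - |\tfrac{a+b}{2}|^{p-2}\tfrac{a+b}{2}\bigr)$ when $p\ge 2$ (and a similar but sign-careful computation of the antiderivative $\frac{1}{p-1}|s|^{p-2}s$ for $1\le p<2$, valid since on this subinterval $s$ does not change sign unless $a<0<b$, a case one handles by splitting at $0$). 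Applying Lemma \ref{lem-2-1} with $\alpha=p-1$ to the difference $|b|^{p-2}b-|\tfrac{a+b}{2}|^{p-2}\tfrac{a+b}{2}$ converts it into a constant times $(|b|+|\tfrac{a+b}{2}|)^{p-2}(b-\tfrac{a+b}{2}) \gtrsim (|a|+|b|)^{p-2}(b-a)$, yielding $\bm h_+(a,b)\gtrsim (|a|+|b|)^{p-2}(b-a)^2$. For the upper bound, on the whole interval $[a,b]$ one has $|s|\le |a|+|b|$ so $|s|^{p-2}\le (|a|+|b|)^{p-2}$ when $p\ge 2$; then $\bm h_+(a,b)\le (p-1)(|a|+|b|)^{p-2}\int_a^b(s-a)\,ds = \frac{p-1}{2}(|a|+|b|)^{p-2}(b-a)^2$ directly. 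When $1\le p<2$ the function $|s|^{p-2}$ is unbounded near $0$, so instead I would again integrate explicitly: $\int_a^b|s|^{p-2}(s-a)\,ds$, integrate by parts to $\frac{1}{p-1}\bigl[(b-a)|b|^{p-2}b - \int_a^b |s|^{p-2}s\,ds\bigr]$ and estimate using Lemma \ref{lem-2-1} once more, or simply cite that \cite{BK24} records exactly this inequality so the computation may be quoted.

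The main obstacle, and the only place requiring genuine care, is the singular/sign-changing regime $1<p<2$ with $a<0<b$, where $|s|^{p-2}$ blows up at $s=0$ inside the interval of integration; here one must check that the integral still converges (it does, since $p-2>-1$) and that the explicit antiderivative $\frac{1}{p-1}\operatorname{sgn}(s)|s|^{p-1}$ is used with the correct signs on each side of $0$. Once that bookkeeping is done, Lemma \ref{lem-2-1} with $\alpha = p-1>0$ does all the remaining work of matching the $(|a|+|b|)^{p-2}|b-a|$ factor. I would therefore organize the write-up as: (i) reduce to $b>a$ and the ``$+$'' case; (ii) prove the upper bound by the crude pointwise estimate for $p\ge 2$ and by an explicit integration plus Lemma \ref{lem-2-1} for $p<2$; (iii) prove the lower bound by shrinking the domain to $[\frac{a+b}{2},b]$ and invoking Lemma \ref{lem-2-1}; and (iv) remark that all constants depend only on $p$ through the constant $\gamma(p-1)$ of Lemma \ref{lem-2-1}.
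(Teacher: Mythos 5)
The paper gives no proof of this lemma at all --- it is imported verbatim from \cite{BK24} with only the remark that it follows from Lemma \ref{lem-2-1} --- so the comparison is with the standard argument, and your analytic core is exactly that argument and is sound. The lower bound by shrinking the domain of integration to the half-interval adjacent to the upper endpoint and applying Lemma \ref{lem-2-1} with $\alpha=p-1$ to the increment of $s\mapsto|s|^{p-2}s$ is correct, provided you also record explicitly that $|b|+\bigl|\tfrac{a+b}{2}\bigr|$ is comparable to $|a|+|b|$ from both sides (you need the $\geq$ direction when $p\geq2$ and the $\leq$ direction when $p<2$); this follows from $|a|\leq 2\bigl|\tfrac{a+b}{2}\bigr|+|b|$. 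For the upper bound with $1<p<2$ you can avoid the integration by parts: $\int_a^b|s|^{p-2}(s-a)\,ds\leq(b-a)\int_a^b|s|^{p-2}\,ds=\tfrac{b-a}{p-1}\bigl(|b|^{p-2}b-|a|^{p-2}a\bigr)$, and Lemma \ref{lem-2-1} finishes it; convergence across $s=0$ is automatic since $p-2>-1$.

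The case bookkeeping in your opening reduction, however, is wrong as written, and the error is worth tracing to its source. You assert that $a\geq b$ forces $(a-b)_+=0$; in fact $(a-b)_+=a-b>0$ there, so precisely in the regime where you (correctly, from the printed definition) compute $\bm{h}_+(a,b)=0$, the lower bound of the lemma would fail, while in the regime $b>a$ where your integral is nontrivial one has $(a-b)_+=0$ and the upper bound would fail; you also drop the minus sign when reversing $\int_b^a$ to $\int_a^b$. The root cause is that \eqref{2.1} as printed, with the factor $(s-a)_\pm$, is incompatible with the lemma and with the claim $\bm{h}_\pm\geq0$: the intended definition --- the one consistent with $\bm{h}_\pm(u,k)$ being the boundary term produced by testing the equation with $(u-k)_\pm\psi^p$, and with the way Lemma \ref{lem-2-2} is invoked before \eqref{3.2} --- is $\bm{h}_\pm(a,b)=\pm(p-1)\int_b^a|s|^{p-2}(s-b)_\pm\,ds$. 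With that correction $\bm{h}_+(a,b)=(p-1)\int_b^a|s|^{p-2}(s-b)\,ds$ for $a>b$ and vanishes for $a\leq b$, your computation applies verbatim after exchanging the roles of $a$ and $b$, and the proof goes through. You should either state this corrected definition or flag the inconsistency; as it stands the write-up proves the comparison for the wrong truncation.
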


We now give an embedding lemma, extracted from \cite[Lemma 2.2]{BK24}.

\begin{lemma} 
	\label{lem-2-3}
	Let $s\in(0,1]$, $p\geq 1$ and $1<m\leq\max\{p,2\}$. For each function
	$$u\in L^{\infty}\left(I; L^m(K_R)\right) \cap L^p\left(I; W^{s, p}(K_R)\right),$$
	there exists a constant $\gamma(N,s,p,\bar{q},\bar{r})$ such that
	\begin{align*}
	\|u\|_{L^{\bar{q}, \bar{r}}\left(K_R \times I\right)}^{\frac{N \bar{q} \bar{r}}{sp \bar{q}+N\bar{r}}} \leq \gamma\left(\int_I[u(\cdot,t)]_{W^{s, p}\left(K_R\right)}^p \,dt+R^{-s p}\|u\|_{L^p\left(I;L^p\left(K_R\right)\right)}^p+\underset{t \in I}{\operatorname{ess} \sup }\|u(\cdot, t)\|_{L^m\left(K_R\right)}^m\right), 
	\end{align*}
	where the positive numbers $\bar{q}$ and $\bar{r}$ fulfilling
	$$
	\frac{1}{\bar{q}}+\frac{1}{\bar{r}}\left(\frac{s p}{N}+\frac{p}{m}-1\right)=\frac{1}{m},
	$$
	and their admissible ranges are
	\begin{align*}
	\left\{\begin{array}{lll}
		\bar{q} \in\left[m, \frac{Np}{N-sp}\right], & \bar{r}\in \left[p, \infty\right) & \text { if } N>sp, \\
		\bar{q} \in[m, \infty), & \bar{r} \in\left(\frac{msp}{N}+p-m, \infty\right) & \text { if } N=sp, \\
		\bar{q} \in[m, \infty], & \bar{r} \in\left[\frac{msp}{N}+p-m, \infty\right) & \text { if } N<sp.
	\end{array}\right.
	\end{align*}
\end{lemma}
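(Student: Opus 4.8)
The final statement to be proved is Lemma~\ref{lem-2-3}, a parabolic Sobolev-type embedding (a Gagliardo--Nirenberg interpolation inequality adapted to the fractional setting).

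\medskip

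\textbf{Proposed approach.} The plan is to derive the estimate by interpolating the mixed Lebesgue norm $\|u\|_{L^{\bar q,\bar r}(K_R\times I)}$ between the fractional Sobolev control in space and the $L^\infty$-in-time, $L^m$-in-space bound, using H\"older's inequality in both variables followed by the fractional Sobolev--Poincar\'e inequality on the ball $K_R$. First I would fix a time slice $t\in I$ and apply the fractional Sobolev inequality on $K_R$: when $N>sp$ one has $\|u(\cdot,t)\|_{L^{p^*_s}(K_R)}^p\le \gamma\big([u(\cdot,t)]_{W^{s,p}(K_R)}^p+R^{-sp}\|u(\cdot,t)\|_{L^p(K_R)}^p\big)$ with $p^*_s=\frac{Np}{N-sp}$, and analogous statements with any finite exponent in the borderline/supercritical cases. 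Then I would interpolate $L^{\bar q}(K_R)$ between $L^m(K_R)$ and $L^{p^*_s}(K_R)$ (or whichever Sobolev exponent is used), writing $\|u(\cdot,t)\|_{L^{\bar q}(K_R)}\le \|u(\cdot,t)\|_{L^m(K_R)}^{1-\lambda}\|u(\cdot,t)\|_{L^{p^*_s}(K_R)}^{\lambda}$ for the appropriate $\lambda\in[0,1]$ determined by $\frac1{\bar q}=\frac{1-\lambda}{m}+\frac{\lambda}{p^*_s}$.

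\medskip

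\textbf{Key steps in order.} (1) State and invoke the fractional Sobolev--Poincar\'e inequality on $K_R$, with the exponent and the admissible range dictated by whether $N>sp$, $N=sp$, or $N<sp$. (2) For a.e.\ $t$, interpolate the $L^{\bar q}(K_R)$-norm between $L^m$ and the Sobolev exponent; raise to the power $\bar r$ and integrate in time over $I$. (3) On the right-hand side, pull the $L^m$-in-space factor out using $\operatorname{ess\,sup}_{t\in I}\|u(\cdot,t)\|_{L^m(K_R)}$, which is finite by hypothesis, leaving a time integral of $[u(\cdot,t)]_{W^{s,p}}^p+R^{-sp}\|u(\cdot,t)\|_{L^p}^p$ raised to a power; choosing $\bar r$ so that this remaining power equals $1$ (equivalently, so that $\lambda\bar r = p$ after the Sobolev step) is exactly what the constraint $\frac1{\bar q}+\frac1{\bar r}\big(\frac{sp}{N}+\frac pm-1\big)=\frac1m$ encodes. (4) Collect exponents: the left-hand side ends up raised to the power $\frac{N\bar q\bar r}{sp\bar q+N\bar r}$, which is precisely the arithmetic of combining the interpolation exponent $\lambda$ with the time integration, and verify the admissible ranges of $\bar q,\bar r$ are exactly those for which $\lambda\in[0,1]$ and the time exponent is nonnegative. (5) Since the estimate holds for smooth functions dense in the stated space, conclude by approximation.

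\medskip

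\textbf{Main obstacle.} The routine part is the H\"older/interpolation bookkeeping; the delicate point is bookkeeping the exponents so that they match \emph{exactly} the stated relation $\frac1{\bar q}+\frac1{\bar r}\big(\frac{sp}{N}+\frac pm-1\big)=\frac1m$ and the exponent $\frac{N\bar q\bar r}{sp\bar q+N\bar r}$ on the left-hand side, and then checking that the three regimes $N\gtrless sp$ give precisely the listed admissible intervals for $\bar q$ and $\bar r$ (in particular handling the endpoint cases, e.g.\ $\bar q=\infty$ when $N<sp$, and the open endpoint $\bar r>\frac{msp}{N}+p-m$ when $N=sp$). Since this lemma is quoted verbatim from \cite[Lemma 2.2]{BK24}, I would in practice simply cite that reference and reproduce the short argument above, rather than re-derive every constant.
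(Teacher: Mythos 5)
Your proposal is fine: the paper itself gives no proof of this lemma but quotes it verbatim from \cite[Lemma 2.2]{BK24}, which is exactly what you say you would do in practice, and your sketch (slice-wise fractional Sobolev--Poincar\'e inequality, interpolation of $L^{\bar q}$ between $L^m$ and the Sobolev exponent with $\lambda\bar r=p$, then integration in time) is the standard derivation whose exponent arithmetic does reproduce both the constraint on $\bar q,\bar r$ and the power $\frac{N\bar q\bar r}{sp\bar q+N\bar r}$ on the left-hand side. No gap to report.
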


We end this section by listing an iteration lemma, extracted from \cite[Chapter \uppercase\expandafter{\romannumeral1}, Lemma 4.2]{D93}.

\begin{lemma} 
	\label{lem-2-4}
	Suppose $\{Y_j\}_{j=0}^\infty$ and $\{Z_j\}_{j=0}^\infty$  are sequences of positive numbers that satisfy
	\begin{align*}
	Y_{j+1} \leq K b^j\left(Y_j^{1+\delta}+Z_j^{1+\kappa} Y_j^\delta\right) \quad \text { and } \quad Z_{j+1} \leq K b^j\left(Y_j+Z_j^{1+\kappa}\right),\quad j=0,1,2, \ldots
	\end{align*}
	for some constants $K,b>1$, and $\delta,\kappa>0$. If 
	\begin{align*}
	Y_0+Z_0^{1+\kappa} \leq(2 K)^{-\frac{1+\kappa}{\sigma}} b^{-\frac{1+\kappa}{\sigma^2}},
	\end{align*}
	with $\sigma=\min\{\kappa,\delta\}$, then $Y_j\rightarrow 0$ and $Z_j\rightarrow 0$ as $j\rightarrow \infty$.
\end{lemma}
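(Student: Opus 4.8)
The plan is to run the standard fast geometric convergence induction, controlling \emph{both} sequences by a single geometric sequence. Put
\begin{align*}
\xi:=b^{-(1+\kappa)/\sigma}\in(0,1),\qquad A:=(2K)^{-(1+\kappa)/\sigma}\,b^{-(1+\kappa)/\sigma^2},
\end{align*}
so that the smallness hypothesis reads exactly $Y_0+Z_0^{1+\kappa}\le A$. I would then prove by induction on $j$ that
\begin{align*}
Y_j\le A\xi^j\qquad\text{and}\qquad Z_j^{1+\kappa}\le A\xi^j\qquad\text{for all }j\ge0;
\end{align*}
since $\xi<1$, this immediately gives $Y_j\to0$ and $Z_j\to0$. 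The base case $j=0$ is trivial because $Y_0,Z_0^{1+\kappa}\ge0$ and their sum is at most $A$.

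For the inductive step, assume both bounds hold at step $j$. Substituting them into the first recursion and using $Z_j^{1+\kappa}Y_j^{\delta}\le(A\xi^j)^{1+\delta}$ gives $Y_{j+1}\le 2KA^{1+\delta}b^j\xi^{j(1+\delta)}$; substituting into the second and raising to the power $1+\kappa$ gives $Z_{j+1}^{1+\kappa}\le(2KA)^{1+\kappa}b^{j(1+\kappa)}\xi^{j(1+\kappa)}$. I need each right-hand side to be $\le A\xi^{j+1}$, and after dividing through, these requirements become
\begin{align*}
2KA^{\delta}\,b^{\,j\left(1-\delta(1+\kappa)/\sigma\right)}\le\xi\qquad\text{and}\qquad(2K)^{1+\kappa}A^{\kappa}\,b^{\,j(1+\kappa)\left(1-\kappa/\sigma\right)}\le\xi.
\end{align*}
This is the single point where the choice $\sigma=\min\{\kappa,\delta\}$ enters: it forces both exponents of $b$ to be nonpositive, so the two left-hand sides are nonincreasing in $j$ and it suffices to verify the inequalities at $j=0$. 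Those $j=0$ inequalities, after inserting the values of $A$ and $\xi$ and using $b>1$ and $2K>1$, reduce to $\delta\ge\sigma$ and $\kappa\ge\sigma$, which hold by definition. This closes the induction.

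The computation is completely elementary, so there is no genuine obstacle here — the only real decision is the common decay exponent $(1+\kappa)/\sigma$, and the only thing demanding a little care is that the induction must close \emph{simultaneously} for $Y$ and $Z$ (a rate working for one alone would not suffice), which is precisely why the threshold is formulated with $\sigma=\min\{\kappa,\delta\}$. Equivalently, one may simply invoke \cite[Chapter~I, Lemma~4.2]{D93} verbatim.
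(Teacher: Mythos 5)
Your induction is correct: with $\xi=b^{-(1+\kappa)/\sigma}$ and $A$ equal to the threshold, both verifications at $j=0$ reduce, as you say, to $\delta\ge\sigma$ and $\kappa\ge\sigma$, and the sign of the exponents of $b$ makes the step uniform in $j$. The paper gives no proof of this lemma --- it simply cites \cite[Chapter~I, Lemma~4.2]{D93} --- and your argument is exactly the standard fast geometric convergence proof behind that citation, so there is nothing further to reconcile.
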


\section{Crucial lemmas}
\label{sec3}
In this section, several quantitative estimates will be given. We begin with a Caccioppoli-type inequality, for which the proof follows a similar approach to that in \cite{BDL21, Li24, BK24}. The proof is omitted here.

\begin{lemma}
\label{lem-3-1}
Let $p>1$ and $u$ be a local weak subsolution to \eqref{1.1}. Suppose the cylinder $(t_0-S,t_0)\times K_R(x_0)$ is included in $E_T$. Let $\psi(\cdot,t)\in(0,1]$ be a piecewise smooth cutoff function compactly supported in $K_R(x_0)$ for any $t \in\left(t_0-S, t_0\right)$. There exists a constant $\gamma(p,\Lambda)>0$, such that for any $k\in\mathbb{R}$, we have
\begin{align}
\label{3.1}
&\operatorname*{ess \sup}_{t_0-S<t<t_0} \int_{K_R(x_0)\times\{t\}}  \bm{h}_+(u,k)\psi^p(x,t)\,dx\nonumber\\
&+\int_{t_0-S}^{t_0} \int_{K_R(x_0)} \int_{K_R(x_0)} \min \left\{\psi^p(x,t), \psi^p(y, t)\right\} \frac{\left|w_+(x,t)-w_+(y,t)\right|^p}{|x-y|^{N+sp}}\,dxdydt \nonumber\\
& +\int_{t_0-S}^{t_0} \int_{K_R(x_0)} \psi^p w_+(x,t)\int_{K_R(x_0)} \frac{w_-^{p-1}(y, t)}{|x-y|^{N+sp}}\,dydxdt\nonumber\\
\leq& \gamma \int_{t_0-S}^{t_0} \int_{K_R(x_0)}\bm{h}_+(u,k)|\partial_t\psi^p|\,dxdt+\int_{K_R(x_0)\times\{t_0-S\}}  \bm{h}_+(u,k)\psi^p\,dx\nonumber\\
&+\gamma \int_{t_0-S}^{t_0} \int_{K_R(x_0)}\int_{K_R(x_0)} \max \left\{w_+^p(x,t), w_+^p(y,t)\right\} \frac{|\psi(x,t)-\psi(y,t)|^p}{|x-y|^{N+sp}}\,dxdydt \nonumber\\
&+\gamma\int_{t_0-S}^{t_0}\int_{\mathbb{R}^N \backslash K_R(x_0)}\int_{K_R(x_0)} \frac{w_+^{p-1}(y,t)}{|x-y|^{N+sp}}w_+(x,t)\psi^p(x,t)\,dxdydt,
\end{align}
where $w(x,t):=u(x,t)-k$ and the function $\bm{h}_+$ is defined as in \eqref{2.1}.
\end{lemma}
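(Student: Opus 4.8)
The plan is to derive the Caccioppoli-type inequality \eqref{3.1} by testing the weak formulation of \eqref{1.1} with a judicious choice of test function built from the truncation $w_+ = (u-k)_+$ and the cutoff $\psi$. The natural candidate is $\varphi = w_+ \psi^p$, possibly after a Steklov-type time mollification to make $\partial_t(|u|^{p-2}u)$ meaningful; since the excerpt says the proof is omitted, I will only indicate the formal computation and flag where the mollification is needed. Testing over the interval $(t_0-S,t)$ and passing to the limit in the mollification gives three contributions: the parabolic term $\int_K |u|^{p-2}u\,\partial_t(w_+\psi^p)$, the time-derivative-of-test-function term, and the nonlocal energy term $\mathcal{E}(u,w_+\psi^p,\tau)$ integrated in time.

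\textbf{Handling the parabolic term.} First I would show that the combination of $\int_K |u|^{p-2}u\,\partial_t(w_+\psi^p)$ with the time-boundary pieces produces exactly $\bm{h}_+(u,k)\psi^p$ evaluated at the endpoints plus the error term $\int\!\!\int \bm{h}_+(u,k)|\partial_t\psi^p|$. This is the standard integration-by-parts identity for doubly nonlinear equations: one checks that $\partial_\tau[\bm{h}_+(u,k)\psi^p] = |u|^{p-2}u\,\partial_\tau(w_+\psi^p) - \bm{h}_+(u,k)\partial_\tau\psi^p$ (using $\frac{d}{d\tau}\bm{h}_+(u(\tau),k) = (p-1)|u|^{p-2}(u-k)_+\,\partial_\tau u$, valid where $u>k$, and noting $(u-k)_+\,\partial_\tau u = w_+\partial_\tau u + \partial_\tau(\text{lower order})$ — the precise bookkeeping is where the mollifier earns its keep). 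Taking the essential supremum over $t\in(t_0-S,t_0)$ then yields the first term on the left and the first two terms on the right of \eqref{3.1}.

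\textbf{Handling the nonlocal term.} The core of the argument is estimating $\mathcal{E}(u,w_+\psi^p,\tau)$ from below. Split $\mathbb{R}^N\times\mathbb{R}^N$ into the diagonal block $K_R\times K_R$ and the two off-diagonal blocks $(\mathbb{R}^N\setminus K_R)\times K_R$ and its symmetric counterpart. On the diagonal block, with the abbreviation $U = u(x,\tau)-u(y,\tau)$ and $\Phi = w_+(x,\tau)\psi^p(x,\tau) - w_+(y,\tau)\psi^p(y,\tau)$, one uses the pointwise algebraic inequality (the one attributed to \cite{BGK22}, cf.\ also the reasoning in \cite{BK24, BDL21}) stating that $|U|^{p-2}U\,\Phi$ is bounded below by $c\,\min\{\psi^p(x),\psi^p(y)\}|w_+(x)-w_+(y)|^p|x-y|^{-N-sp}$ minus $C\max\{w_+^p(x),w_+^p(y)\}|\psi(x)-\psi(y)|^p|x-y|^{-N-sp}$, plus a term controlling the interaction with $w_-$. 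Integrating against $d\mu$ over $K_R\times K_R$ and using the kernel bounds $\Lambda^{-1}|x-y|^{-N-sp}\le K\le \Lambda|x-y|^{-N-sp}$ produces the second and third terms on the left of \eqref{3.1} and the third (good-minus-bad cutoff) term on the right. For the off-diagonal blocks, since $\psi$ is supported in $K_R$ we have $\varphi(y,\tau)=0$ for $y\notin K_R$, so the contribution is $\int_{K_R}w_+(x)\psi^p(x)\int_{\mathbb{R}^N\setminus K_R}|U|^{p-2}U\,K(x,y,\tau)\,dy\,dx$; bounding $|U|^{p-2}U \ge -|u(x)-u(y)|^{p-2}(u(x)-u(y))_-$ and then $|u(x)-u(y)|^{p-2}(u(x)-u(y))_- \lesssim w_+^{p-1}(x) + w_+^{p-1}(y)$ (an elementary inequality, since $(u(x)-u(y))_- \le (u(x)-k)_- + (u(y)-k)_+ \le w_+(y)$ when $u(x)\ge k$... one must be slightly careful with signs here) yields, after invoking the kernel bounds, exactly the last term on the right of \eqref{3.1}.

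\textbf{Main obstacle.} The hard part will be the off-diagonal nonlocal estimate and, relatedly, the sign bookkeeping in the algebraic inequality on the diagonal block: one must extract a genuinely positive lower-order term involving $w_-$ (the third term on the left of \eqref{3.1}, which has no analogue in the purely local or the nonnegative-solution setting) while simultaneously absorbing the potentially bad far-field contributions into the $w_+^{p-1}$ tail term — all without any global sign assumption on $u$. This is precisely the delicate algebraic lemma of \cite{BGK22} applied with care; the time-mollification and the passage to the limit, while technically necessary, are routine by comparison. Since the statement explicitly defers the proof to \cite{BDL21, Li24, BK24}, in the final write-up I would simply record that \eqref{3.1} follows by inserting $\varphi = w_+\psi^p$ (in Steklov-mollified form) into the weak formulation and arguing as in those references, and refer the reader there for the full details.
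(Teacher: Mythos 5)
Your proposal is correct and follows exactly the route the paper intends: the paper omits the proof of Lemma \ref{lem-3-1} and defers to \cite{BDL21, Li24, BK24}, and your sketch (Steklov-mollified test function $w_+\psi^p$, the $\bm{h}_+$ integration-by-parts identity for the doubly nonlinear time term, the diagonal/off-diagonal splitting with the algebraic inequality of \cite{BGK22} yielding both the good $w_+\,w_-^{p-1}$ term and the far-field $w_+^{p-1}$ tail) is precisely the standard argument of those references. Your sign bookkeeping in the off-diagonal estimate, $(u(x)-u(y))_-\le w_+(y)+w_-(x)$ with $w_-(x)=0$ on the support of $w_+(x)$, is the correct way to obtain the last term of \eqref{3.1}.
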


The next one is a De Giorgi-type lemma, saying that if some measure density condition arrives, then a pointwise estimate can be obtained in a smaller cylinder.

\begin{lemma}
\label{lem-3-2}
Let $p>1$ and $u$ be a local weak supersolution satisfying $u\geq 0$ in $\mathscr{Q}$.  Assume further that $u\in L^q_{\rm{loc}}(0,T;L^{sp}_{p-1}(\mathbb{R}^N))$ with $q>p-1$. Suppose $M>0$, $\delta\in(0,1)$, and $(x_0,t_0)+Q_{2\rho}(\delta)\subset \mathscr{Q}$. If there holds
\begin{align*}
|\{u\leq M\} \cap(x_0,t_0)+ Q_{\rho}(\delta)| \leq \nu|Q_{\rho}(\delta)|
\end{align*}
for some constant $\nu\in(0,1)$ depending on $N,p,s,q,\Lambda$ and $\delta$, then we have either
\begin{align*}
\left(\frac{\rho}{R}\right)^{\frac{sp}{p-1}}\mathrm{Tail}(u_-;x_0,R;t_0-\delta\rho^{sp},t_0)>M,
\end{align*}
or
\begin{align*}
u\geq\frac{1}{2}M \quad \text { a.e. in } (x_0,t_0)+Q_{\frac{1}{2}\rho}(\delta).
\end{align*}
Moreover, the constant $\nu$ can be more precisely written as $\nu=\bar{\nu}\delta^l$ with some $\bar{\nu}\in(0,1)$ and $l>0$ depending on $N,p,s,q,\Lambda$. 

\begin{proof}
Assume that $(x_0,t_0)=(0,0)$. For $j=0,1,2,\ldots$, taking
\begin{align*}
k_j=\frac{M}{2}+\frac{M}{2^{j+1}},\quad \widehat{k_j}=\frac{k_j+k_{j+1}}{2},\quad \overline{k}_j=\frac{k_{j+1}+\widehat{k}_j}{2}
\end{align*}
and
\begin{align*}
\rho_j=\frac{\rho}{2}+\frac{\rho}{2^j}, \quad \widehat{\rho}_j=\frac{\rho_j+\rho_{j+1}}{2}. 
% \widetilde{\rho}_j=\frac{3\rho_j+\rho_{j+1}}{4}.
\end{align*}
Consider the domains
\begin{align*}
K_j=K_{\rho_j}, \quad \widehat{K}_j=K_{\widehat{\rho}_j}, \ \ j=0,1,2\ldots,
\end{align*}
and
\begin{align*}
Q_j=K_j \times\left(-\delta \rho_j^{sp}, 0\right], \quad \widehat{Q}_j=\widehat{K}_j \times\left(-\delta \widehat{\rho}_j^{sp}, 0\right], \ \ j=0,1,2\ldots.
\end{align*}
Consider a cutoff function $\psi(x,t)\in(0,1]$ defined in $Q_j$ such that
\begin{align*}
\psi=1 \text{ in } Q_{j+1},\quad \psi=0 \text{ on } \partial_p \widehat{Q}_j,\quad |\nabla \psi|\leq \frac{2^j}{\rho},\quad |\partial_t\psi|\leq\frac{2^{spj}}{\delta\rho^{sp}}.
\end{align*}
Additionally, Lemma \ref{lem-2-2} reads that
\begin{align*}
\frac{1}{\gamma}(|u|+\widehat{k}_j)^{p-2}(u-\widehat{k}_j)_-^2\leq \bm{h}_-(u,\widehat{k}_j)\leq \gamma(|u|+\widehat{k}_j)^{p-2}(u-\widehat{k}_j)_-^2.
\end{align*}
Based on this inequality, using the energy estimate \eqref{3.1} in the setting above with $w_-=(u-\widehat{k_j})_-$ gives 
\begin{align}
\label{3.2}
I_{0,1}+I_{0,2}&:=\operatorname*{ess\sup}_{-\delta \rho_{j+1}^{sp}<t<0} \int_{K_{j+1}} (|u|+\widehat{k}_j)^{p-2}w_{-}^2(x,t)\,dx\nonumber\\
&\quad+\iint_{Q_{j+1}}\int_{K_{j+1}}\frac{|w_-(x,t)-w_-(y,t)|^p}{|x-y|^{N+sp}} \,dxdydt \nonumber\\
&\leq\gamma \iint_{Q_j}(|u|+\widehat{k}_j)^{p-2}w_-^2(x,t)|\partial_t \psi^p|\,dxdt\nonumber\\
&\quad+\gamma\iint_{Q_j} \int_{K_j} \max \left\{w_-^p(x, t),w_-^p(y,t)\right\}|\psi(x,t)-\psi(y,t)|^p\,d\mu dt \nonumber\\
&\quad+\gamma \int_{-\delta\rho_j^{sp}}^{0}\int_{\mathbb{R}^N \backslash K_j}\int_{K_j} \frac{w_-^{p-1}(y,t)}{|x-y|^{N+sp}}w_-(x,t)\psi^p(x,t)\,dxdydt\nonumber\\
&=:I_1+I_2+I_3.
\end{align}

Denote 
\begin{align*}
y_j=\frac{|\{u(x,t)\leq k_j\}\cap Q_j|}{\left|Q_j\right|}
\end{align*}
and 
\begin{align*}
z_j=\left[\mint_{-\delta \rho_j^{s p}}^0\left(\frac{\left|A_{-}\left(u;0,\rho_j; k_j\right)\right|}{\left|K_j\right|}\right)^{\frac{q}{q-(p-1)}}\, d t\right]^{\frac{q-(p-1)}{q(1+\kappa)}},
\end{align*}
where $\kappa=\frac{sp}{N}\left[\frac{q-(p-1)}{q}\right]$.
Next, we evaluate $I_1$--$I_3$ in \eqref{3.2} separately. The estimates of $I_1$ and $I_2$ are standard, and we employ the properties of $\psi(x,t)$ to obtain
\begin{align*}
I_1&\leq \iint_{Q_j}(|u|+\widehat{k}_j)^{p-1}w_-(x,t)|\partial_t \psi^p|\,dxdt\\
&\leq \gamma 2^{spj}\frac{M^p}{\delta\rho^{sp}}y_j|Q_j|
\end{align*}
%Again using the properties of the function $\psi(x,t)$, we treat $I_2$ by
and
\begin{align*}
I_2&\leq \gamma 2^{pj}\frac{M^p}{\rho^p}\int_{-\delta\rho_j^{sp}}^{0}\int_{K_j}\int_{K_j}\frac{\chi_{\{u(x,t)<k_j\}}}{|x-y|^{N+sp-p}}\,dxdydt\\
&\leq\gamma 2^{pj} \frac{M^p}{\rho^{sp}}y_j|Q_j|.
\end{align*}
For $I_3$, notice that
\begin{align*}
\frac{|y|}{|y-x|}\leq 1+\frac{|x|}{|y-x|}\leq 1+\frac{\widehat{\rho}_j}{\rho_j-\widehat{\rho}_j}\leq 2^{j+4}
\end{align*}
holds for all $|x|\leq\widehat{\rho}_j$ and  $|y|\geq\rho_j$, then we have by the assumption $u\geq 0$ in $\mathscr{Q}$ that
\begin{align*}
I_3&\leq \gamma 2^{j(N+sp)} M  \int_{-\delta\rho_j^{sp}}^{0}\int_{\mathbb{R}^N \backslash K_j}\int_{K_j}\frac{w_-^{p-1}(y,t)}{|y|^{N+sp}}\chi_{\{u(x,t)<\widehat{k}_j\}}\,dxdydt\\
& \leq\gamma 2^{j(N+sp)}\left( \frac{M^p}{\rho^{sp}}y_j|Q_j|+M\int_{-\delta\rho_j^{sp}}^{0}\int_{\mathbb{R}^N \backslash K_R}\int_{K_j}\frac{u_-^{p-1}(y,t)}{|y|^{N+sp}}\chi_{\{u(x,t)<\widehat{k}_j\}}\,dxdydt\right)\\
&=:\gamma 2^{j(N+sp)} \frac{M^p}{\rho^{sp}}y_j|Q_j|+I_{3,2}.
\end{align*}
By virtue of H\"{o}lder's inequality, we estimate $I_{3,2}$ as
\begin{align*}
	I_{3,2} &\leq  \gamma 2^{j(N+sp)}M [\widetilde{\mathrm{Tail}}\left(u_-;0,R;-\delta\rho^{sp},0\right)]^{p-1}\\
	&\quad \times\left(\int_{-\delta \rho_j^{s p}}^0\left|A_-\left(u;0, \rho_j; \widehat{k}_j\right)\right|^{\frac{q}{q-(p-1)}} \mathrm{d} t\right)^{\frac{q-(p-1)}{q}} \\
	 &\leq \gamma 2^{j(N+sp)}M^p \delta^{\frac{p-1}{q}} \rho^{-N \kappa}\left(\int_{-\delta \rho_j^{s p}}^0\left|A_-\left(u;0, \rho_j; k_j\right)\right|^{\frac{q}{q-(p-1)}} \mathrm{d} t\right)^{\frac{q-(p-1)}{q}} \\
	& \leq \gamma 2^{j(N+sp)}M^p \delta^{\frac{p-1}{q}} \rho^{-N \kappa}\left(\delta^{\frac{q-(p-1)}{q}} \rho_j^{N+N \kappa} z_j^{1+\kappa}\right) \\
	 &\leq \gamma\frac{2^{j(N+s p)}}{\rho^{s p}}M^p z_j^{1+\kappa}\left|Q_j\right|,
\end{align*}
if enforcing
\begin{align*}
\frac{\rho^{\frac{N\kappa}{p-1}}}{\delta^{\frac{1}{q}}}\widetilde{\mathrm{Tail}}(u_-;0,R;-\delta\rho^{sp},0)=\left(\frac{\rho}{R}\right)^{\frac{sp}{p-1}}\mathrm{Tail}(u_-;0,R;-\delta\rho^{sp},0)\leq M.
\end{align*}

Whereas the first part on the left-hand side of \eqref{3.2} can be handled by the definitions of $\widehat{k}_j$, $\overline{k}_j$ and $u\geq 0$ in $\mathscr{Q}$ as
\begin{align*}
\operatorname*{ess\sup}_{-\delta \rho_{j+1}^{sp}<t<0} \int_{K_{j+1}} (|u|+\widehat{k}_j)^{p-2}w_-^2(x,t)\,dx\geq\frac{\gamma}{2^{j|p-2|}}\operatorname*{ess\sup}_{-\delta\rho_{j+1}^{sp}<t<0} \int_{K_{j+1}}(u-\overline{k}_j)_-^p(x,t)\,dx.
\end{align*}
Merging the above estimates with Lemma 2.3 in \cite{BK24}, we arrive at
\begin{align*}
&\quad\operatorname*{ess\sup}_{-\delta\rho_{j+1}^{sp}<t<0} \int_{K_{j+1}}(u-\overline{k}_j)_-^p(x,t)\,dx\\
&\quad+\iint_{Q_{j+1}}\int_{K_{j+1}}\frac{|(u-\overline{k}_j)_-(x,t)-(u-\overline{k}_j)_-(y,t)|^p}{|x-y|^{N+sp}} \,dxdydt \nonumber\\
&\leq\gamma 2^{j(N+2p+2)}\left(\frac{M^p}{\delta\rho^{sp}}y_j|Q_j|+\frac{M^p}{\rho^{sp}} z_j^{1+\kappa}\left|Q_j\right|\right).
\end{align*}
The following steps are analogous to Lemma 4.1 in \cite{BK24}. For the sake of clarity in the proof, we provide the details here. By utilizing H\"{o}lder's inequality and the embedding Lemma \ref{lem-2-3} with $m=p$, $\bar{q}=p(1+\frac{sp}{N})$ and $\bar{r}=p(1+\frac{sp}{N})$, we have
\begin{align}
\label{3.3}
 y_{j+1}\left|Q_{j+1}\right| &\leq \iint_{Q_{j+1}} \frac{\left(u-\overline{k}_j\right)_{-}^p}{\left(\overline{k}_j-k_{j+1}\right)^p}\,dxdt\nonumber\\
& \leq \gamma 2^{j p}M^{-p}\iint_{Q_{j+1}} \left(u-\overline{k}_j\right)_{-}^p\,dxdt\nonumber\\
& \leq \gamma 2^{j p}M^{-p}\left|Q_j\right|^{\frac{s p}{N+s p}} y_j^{\frac{s p}{N+s p}}\left\|\left(u-\overline{k}_j\right)_{-}\right\|_{L^{p\left(1+\frac{s p}{N}\right)}\left(Q_{j+1}\right)}^p \nonumber\\
& \leq \gamma 2^{j p}M^{-p}\left|Q_j\right|^{\frac{s p}{N+s p}} y_j^{\frac{s p}{N+s p}}\left[\int_{-\delta \rho_{j+1}^{s p}}^0\left[\left(u-\overline{k}_j\right)_-(\cdot, t)\right]_{W^{s,p}\left(K_{j+1}\right)}^p\,dt\right.\nonumber\\
&\quad \left.+\rho_{j+1}^{-s p}\left\|\left(u-\overline{k}_j\right)_{-}\right\|_{L^p(Q_{j+1})}^p+\underset{t \in\left(-\delta \rho_{j+1}^{s p}, 0\right)}{\operatorname{ess} \sup } \int_{K_{j+1}}\left(u-\overline{k}_j\right)_{-}^p(x,t)\,dx \right] \nonumber\\
& \leq \gamma 2^{j p}M^{-p}\left|Q_j\right|^{\frac{s p}{N+s p}} y_j^{\frac{s p}{N+s p}} 2^{j(N+2p+2)}\left(\frac{M^p}{\delta\rho^{sp}}y_j|Q_j|+ \frac{M^p}{\rho^{sp}} z_j^{1+\kappa}\left|Q_j\right|\right).
%=: c 2^{h p}(\xi \omega)^{-p}\left|Q_{\rho h}^{(\delta)}\right|^{\frac{s p}{n+s p}} y_h^{\frac{s p}{n+s p}} J .
\end{align}
Dividing both sides of \eqref{3.3} by $|Q_{j+1}|$ yields
\begin{align}
\label{3.4}
y_{j+1} \leq \frac{\gamma}{\delta} 2^{j(N+3p+2)} \delta^{\frac{s p}{N+s p}}\left(y_j^{1+\frac{s p}{N+sp}}+z_j^{1+\kappa} y_j^{\frac{s p}{N+s p}}\right).
\end{align}

Let us now estimate $z_{j+1}$. An application of Lemma \ref{lem-2-3} with $m=p$, $\bar{q}=p(1+\kappa)$ and $\bar{r}=\frac{qp(1+\kappa)}{q-(p-1)}$ gives 
\begin{align*}
	\delta^{\frac{q-(p-1)}{q(1+\kappa)}} \rho_j^N z_{j+1} & \leq\left[\int_{-\delta \rho_{j+1}^{s p}}^0\left(\int_{K_{j+1}}\left(\frac{\left(u-\overline{k}_j\right)_{-}}{\left(\overline{k}_j-k_{j+1}\right)}\right)^{p(1+\kappa)}\,dx\right)^{\frac{q}{q-(p-1)}} \,dt\right]^{\frac{q-(p-1)}{q(1+\kappa)}} \\
	& \leq \gamma \frac{2^{j p}}{M^p}\left\|\left(u-\overline{k}_j\right)_{-}\right\|_{L^{p(1+\kappa),\frac{qp(1+\kappa)}{q-(p-1)}}(Q_{j+1})}^{p} \\
	& \leq \gamma \frac{2^{jp}}{M^p} \left[\int_{-\delta \rho_{j+1}^{s p}}^0\left[\left(u-\overline{k}_j\right)_-(\cdot, t)\right]_{W^{s,p}\left(K_{j+1}\right)}^p\,dt\right.\nonumber\\
	&\quad \left.+\rho_{j+1}^{-s p}\left\|\left(u-\overline{k}_j\right)_{-}\right\|_{L^p(Q_{j+1})}^p+\underset{t \in\left(-\delta \rho_{j+1}^{s p}, 0\right)}{\operatorname{ess} \sup } \int_{K_{j+1}}\left(u-\overline{k}_j\right)_{-}^p(x,t)\,dx \right] \\
	& \leq \gamma \frac{2^{j(N+3p+2)}}{\delta \rho^{s p}} y_j\left|Q_j\right|+\gamma \frac{2^{j(N+3p+2)}}{\rho^{sp}} z_j^{1+\kappa}\left|Q_j\right|,
\end{align*}
which directly leads to
\begin{align}
\label{3.5}
z_{j+1} \leq \frac{\gamma}{\delta} 2^{j(N+3 p+2)} \delta^{1-\frac{q-(p-1)}{q(1+\kappa)}}\left(y_j+z_j^{1+\kappa}\right).
\end{align}
From \eqref{3.4} and \eqref{3.5}, we obtain the relations

\begin{align*}
& y_{j+1} \leq \frac{\gamma}{\delta} 2^{j(N+3p+2)} \delta^\ell\left(y_j^{1+\frac{s p}{N+sp}}+z_j^{1+\kappa} y_j^{\frac{s p}{N+s p}}\right), \\
& z_{j+1} \leq \frac{\gamma}{\delta} 2^{j(N+3p+2)} \delta^\ell\left(y_j+z_j^{1+\kappa}\right),
\end{align*}\\
where $\gamma$ is a constant just depending on $N,p,s,q,\Lambda$ and
\begin{align}
\label{3.6}
\ell=\min \left\{\frac{s p}{N+s p}, 1-\frac{q-(p-1)}{q(1+\kappa)}\right\}.
\end{align}

Moreover, we can see
\begin{align*}
z_0 \leq\left(\mint_{-\delta \rho_0^{sp}}^0 \frac{\left|A_{-}\left(u;0,\rho_0; k_0\right)\right|}{\left|K_0\right|}\,\right)^{\frac{q-(p-1)}{q(1+\kappa)}} \leq y_0^{\frac{q-(p-1)}{q(1+\kappa)}}.
\end{align*}
Thus there holds
\begin{align*}
y_0+z_0^{1+\kappa} \leq 2 \nu^{\frac{q-(p-1)}{q}},
\end{align*}
provided that $y_0 \leq \nu<1$. Making use of Lemma \ref{lem-2-4}, it indicates that $y_j\rightarrow 0$ as $j\rightarrow\infty$ if
\begin{align*}
y_0+z_0^{1+\kappa}\leq\left(\frac{2\gamma}{\delta^{1-\ell}}\right)^{-\frac{1+\kappa}{\varsigma}}2^{-(N+3p+2)\frac{1+\kappa}{\varsigma^2}},
\end{align*}
where $\varsigma=\min\{\kappa,\frac{sp}{N+sp}\}$. Finally, we get the desired result by taking
\begin{align*}
\nu \leq\left(\frac{\delta^{1-\ell}}{4 \gamma}\right)^{\frac{q(1+\kappa)}{(q-(p-1)) \varsigma}} 2^{-(N+3p+2) \frac{q(1+\kappa)}{(q-(p-1)) \varsigma^2}}.
\end{align*}
\end{proof}
\end{lemma}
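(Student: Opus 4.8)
The plan is to run a De Giorgi iteration on shrinking cylinders, with the extra feature that the nonlocal tail forces us to iterate \emph{two} coupled sequences rather than one. After translating so that $(x_0,t_0)=(0,0)$, I would fix decreasing levels $k_j=\frac{M}{2}+\frac{M}{2^{j+1}}\searrow\frac{M}{2}$ (together with intermediate levels $\widehat{k}_j,\overline{k}_j$ to absorb the losses coming from cutoffs) and shrinking radii $\rho_j=\frac{\rho}{2}+\frac{\rho}{2^j}\searrow\frac{\rho}{2}$, forming the nested backward cylinders $Q_j=K_{\rho_j}\times(-\delta\rho_j^{sp},0]$. On each $Q_j$ I would pick a cutoff $\psi\in(0,1]$ that equals $1$ on $Q_{j+1}$, vanishes on the parabolic boundary of the intermediate cylinder, and satisfies $|\nabla\psi|\lesssim 2^j/\rho$ and $|\partial_t\psi|\lesssim 2^{spj}/(\delta\rho^{sp})$, and feed the truncation $(u-\widehat{k}_j)_-$ into the energy estimate \eqref{3.1} (in its supersolution version, with the $(\cdot)_-$ truncation and $\bm{h}_-$). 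Lemma \ref{lem-2-2} then converts the energy on the left-hand side into control of $(u-\overline{k}_j)_-^p$ in $L^\infty_tL^p_x$ and of the $W^{s,p}$-seminorm of $(u-\overline{k}_j)_-$ over $Q_{j+1}$.

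The three terms on the right-hand side are then estimated separately. The time-derivative term $I_1$ and the cutoff-commutator term $I_2$ are routine and are bounded by $\gamma\,2^{cj}\frac{M^p}{\delta\rho^{sp}}y_j|Q_j|$ with $y_j:=|\{u\le k_j\}\cap Q_j|/|Q_j|$. The genuinely nonlocal term $I_3$ is the crux: using that $u\ge 0$ throughout $\mathscr{Q}$ and the elementary estimate $|y|/|y-x|\le 2^{j+4}$ for $x\in\widehat{K}_j$, $|y|\ge\rho_j$, I would split the $y$-integration into the part inside $K_R$ — which contributes another multiple of $\frac{M^p}{\rho^{sp}}y_j|Q_j|$, since there $(u-\widehat{k}_j)_-\le M$ — and the part outside $K_R$, which by H\"older's inequality in time produces $[\widetilde{\mathrm{Tail}}(u_-;0,R;-\delta\rho^{sp},0)]^{p-1}$ times a power of $\int_{-\delta\rho_j^{sp}}^0|A_-(u;0,\rho_j;\widehat{k}_j)|^{q/(q-(p-1))}\,dt$. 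Working under the dichotomy hypothesis (i.e. assuming the tail alternative fails, so that $(\rho/R)^{sp/(p-1)}\mathrm{Tail}(u_-;0,R;-\delta\rho^{sp},0)\le M$, which is exactly $\rho^{N\kappa/(p-1)}\delta^{-1/q}\widetilde{\mathrm{Tail}}(u_-;\cdots)\le M$), this far term is controlled by $\gamma\,2^{cj}\frac{M^p}{\rho^{sp}}z_j^{1+\kappa}|Q_j|$ with $\kappa=\frac{sp}{N}\cdot\frac{q-(p-1)}{q}$ and $z_j$ the time-integrated measure quantity appearing in the statement.

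Having reduced everything to $y_j$ and $z_j$, I would apply the parabolic Sobolev embedding Lemma \ref{lem-2-3} (with $m=p$) twice: once with $\bar q=\bar r=p(1+\frac{sp}{N})$ to turn the energy bound into the recursion $y_{j+1}\le\frac{\gamma}{\delta}2^{cj}\delta^{sp/(N+sp)}(y_j^{1+sp/(N+sp)}+z_j^{1+\kappa}y_j^{sp/(N+sp)})$, and once with $\bar q=p(1+\kappa)$, $\bar r=\frac{qp(1+\kappa)}{q-(p-1)}$ to get $z_{j+1}\le\frac{\gamma}{\delta}2^{cj}\delta^{1-(q-(p-1))/(q(1+\kappa))}(y_j+z_j^{1+\kappa})$. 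With $\ell=\min\{\frac{sp}{N+sp},\,1-\frac{q-(p-1)}{q(1+\kappa)}\}$, these are precisely the hypotheses of the coupled fast-geometric-convergence Lemma \ref{lem-2-4}. Since $z_0\le y_0^{(q-(p-1))/(q(1+\kappa))}$ and $y_0\le\nu$, the smallness condition of Lemma \ref{lem-2-4} holds once $\nu$ is chosen so that $2\nu^{(q-(p-1))/q}$ lies below the threshold $(2\gamma\delta^{\ell-1})^{-(1+\kappa)/\varsigma}2^{-c(1+\kappa)/\varsigma^2}$ with $\varsigma=\min\{\kappa,\frac{sp}{N+sp}\}$; solving for $\nu$ gives exactly $\nu=\bar\nu\delta^l$ with $l=(1-\ell)\frac{q(1+\kappa)}{(q-(p-1))\varsigma}$ and $\bar\nu,l$ depending only on $N,p,s,q,\Lambda$. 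Then $y_j\to 0$, i.e. $|\{u\le M/2\}\cap Q_{\rho/2}(\delta)|=0$, which is the conclusion. The main difficulty, and the only place the nonlocal structure genuinely intrudes, is the treatment of $I_3$: because the parabolic tail is integrable to only a finite power in time, one cannot bound its contribution by the bad-set measure alone, and this is precisely what forces the auxiliary sequence $z_j$ and the two-sequence iteration lemma instead of the classical single-sequence one; a secondary, purely bookkeeping, difficulty is keeping every power of $2^j$ and of $\delta$ explicit so as to extract the stated form $\nu=\bar\nu\delta^l$.
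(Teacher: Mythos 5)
Your proposal follows the paper's proof essentially verbatim: the same nested cylinders and intermediate levels $\widehat{k}_j,\overline{k}_j$, the same splitting of the nonlocal term $I_3$ into a near part absorbed into $y_j$ and a far part controlled by the tail and $z_j^{1+\kappa}$ under the dichotomy hypothesis, the same two applications of Lemma \ref{lem-2-3} with the identical exponent choices, and the same two-sequence iteration via Lemma \ref{lem-2-4} yielding $\nu=\bar\nu\delta^l$. The approach and all key steps coincide with the paper's argument.
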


The following lemma is from the measure density condition at a given time to deduce a pointwise estimate at later times.

\begin{lemma}
	\label{lem-3-3}
Let $p>1$ and $u$ be a local weak supersolution to \eqref{1.1} that satisfies $u\geq 0$ in $\mathscr{Q}$. Assume further that $u\in L^q_{\rm{loc}}(0,T;L^{sp}_{p-1}(\mathbb{R}^N))$ with $q>p-1$. Let $M>0$ be a constant. If one can find $\delta,\nu_0\in(0,1)$ depending only on $N,p,s,q,\Lambda$, such that  
\begin{align}
\label{3.7}
|\{u(\cdot,t_0)\leq M\}\cap K_{2\rho}(x_0)|<\nu_0|K_{2\rho}|,
\end{align}
then there holds either
\begin{align*}
\left(\frac{\rho}{R}\right)^{\frac{sp}{p-1}}\mathrm{Tail}(u_-;x_0,R;t_0,t_0+\delta\rho^{sp})>M,
\end{align*}
or
\begin{align*}
u \geq \frac{1}{4} M \quad \text { a.e. in } K_{\frac{1}{2} \rho}(x_0) \times\left(t_0+\frac{3}{4} \delta \rho^{sp}, t_0+\delta \rho^{sp}\right],
\end{align*}
with imposing $K_{2 \rho}(x_0) \times(t_0, t_0+\delta \rho^{sp}]\subset\mathscr{Q}$.
\end{lemma}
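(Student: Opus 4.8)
## Proof Proposal for Lemma \ref{lem-3-3}

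\textbf{Overall strategy.} The plan is to propagate the measure information \eqref{3.7} available at the single time slice $t=t_0$ forward in time, first to a full (thin) cylinder, and then to feed that cylinder into the De Giorgi-type Lemma \ref{lem-3-2} to obtain the pointwise lower bound. The mechanism linking a pointwise/measure condition at one time level to a measure estimate on a later time interval is a logarithmic energy estimate; this is the standard ``shrinking lemma'' device in the Kinnunen--Kuusi--DiBenedetto circle of ideas, adapted here to the nonlocal doubly nonlinear setting.

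\textbf{Step 1: a logarithmic estimate and expansion of the measure-positivity set in time.} Without loss of generality set $(x_0,t_0)=(0,0)$. Using the Caccioppoli-type inequality Lemma \ref{lem-3-1} (or rather a variant obtained by testing the equation with a test function built from $\psi^p$ times a function of the form $\bigl(\ln\tfrac{M}{u+\text{small}}\bigr)_+$ applied to $w_-=(u-M)_-$), one obtains control of the integral $\int_{K_{2\rho}}\Psi^2(u(\cdot,t))\,\psi^p\,dx$ uniformly in $t\in(0,\delta(2\rho)^{sp}]$ in terms of its value at $t=0$, the gradient/cutoff contributions of size $\gamma\delta 2^{\text{stuff}}$, and the nonlocal tail term, which is absorbed precisely under the alternative hypothesis $\bigl(\tfrac{\rho}{R}\bigr)^{sp/(p-1)}\mathrm{Tail}(u_-;0,R;0,\delta\rho^{sp})\le M$. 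Here $\Psi$ is the truncated logarithm; the crucial feature is that $\Psi(u)$ is large on $\{u\le cM\}$ for $c$ small. Combining the bound at $t=0$ (where the set $\{u(\cdot,0)\le M\}$ is small by \eqref{3.7}, so $\int \Psi^2(u(\cdot,0))\psi^p\,dx$ is small, of order $\nu_0$ times the logarithm scale squared) with the uniform-in-$t$ estimate, and then using Chebyshev on the level set $\{u(\cdot,t)\le cM\}$, one concludes: for a suitable small $c\in(0,1)$ (we will take $c=\tfrac14$ or $c=\tfrac12$ and adjust constants), and after choosing $\delta$ and $\nu_0$ small enough depending only on $N,p,s,q,\Lambda$,
\begin{align*}
|\{u(\cdot,t)\le cM\}\cap K_{\rho}|\le \tfrac12\,\nu\,|K_\rho|\qquad\text{for all }t\in(0,\delta\rho^{sp}],
\end{align*}
where $\nu=\bar\nu\delta^l$ is exactly the threshold constant appearing in Lemma \ref{lem-3-2} (applied at level $cM$ and on the cylinder $Q_\rho(\delta)$, noting $\delta$ here is the same scaling parameter).

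\textbf{Step 2: integrate in time and apply the De Giorgi-type lemma.} Integrating the pointwise-in-$t$ measure bound from Step 1 over the time interval $(-\delta\rho^{sp},0]$ after the shift back to the backward-cylinder notation $(0,0)+Q_\rho(\delta)=K_\rho\times(-\delta\rho^{sp},0]$ — equivalently, working on $K_\rho\times(0,\delta\rho^{sp}]$ directly — gives
\begin{align*}
\bigl|\{u\le cM\}\cap \bigl(K_\rho\times(\tfrac{\text{something}}{},\delta\rho^{sp}]\bigr)\bigr|\le \nu\,\bigl|K_\rho\times(\cdots)\bigr|.
\end{align*}
This is precisely the measure density hypothesis of Lemma \ref{lem-3-2} with $M$ there replaced by $cM$, radius $\rho$, and the same $\delta$. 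Applying Lemma \ref{lem-3-2} (whose tail alternative is again covered by the alternative in the present statement, up to enlarging $R$-dependent constants, which only affects the numerical constants), we get either the tail bound — which is exactly the first alternative in the conclusion — or
\begin{align*}
u\ge \tfrac12 cM\qquad\text{a.e. in }(0,0)+Q_{\rho/2}(\delta),
\end{align*}
which, after restricting to the last quarter of the time interval and choosing $c=\tfrac12$, yields $u\ge \tfrac14 M$ a.e.\ in $K_{\rho/2}\times(\tfrac34\delta\rho^{sp},\delta\rho^{sp}]$, as claimed. One only needs to check that the inclusion hypotheses line up: $K_{2\rho}\times(0,\delta\rho^{sp}]\subset\mathscr{Q}$ is assumed, and Lemma \ref{lem-3-2} needs $Q_{2\rho}(\delta)\subset\mathscr{Q}$, which holds in the forward-in-time formulation since we are working inside $K_{2\rho}\times(0,\delta\rho^{sp}]$.

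\textbf{Anticipated main obstacle.} The delicate point is Step 1: producing a \emph{clean} logarithmic energy estimate in the doubly nonlinear nonlocal setting and, within it, controlling the nonlocal long-range interaction term so that it is genuinely absorbed by the tail alternative rather than contributing uncontrolled mass. The time-derivative term $\partial_t(|u|^{p-2}u)$ interacts with the logarithmic test function in a way that is not a perfect $\partial_t$ of a single ``Boltzmann-type'' quantity when $p\neq 2$, so one must use Lemma \ref{lem-2-2} and the monotonicity of $s\mapsto |s|^{p-2}s$ carefully to extract a nonnegative principal part; and the splitting of the nonlocal integral into the region $K_{2\rho}$ (handled by the cutoff, as in $I_2$ of Lemma \ref{lem-3-2}) versus $\mathbb{R}^N\setminus K_{2\rho}$ (handled by the tail, as in $I_3$) must be done with the same geometric estimate $\tfrac{|y|}{|y-x|}\le 2^{k+4}$-type bound used there. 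Tracking how $\delta$ enters — it multiplies the ``bad'' terms and must be chosen \emph{after} $\nu$ is related to $\delta^l$ — requires the same bookkeeping as at the end of the proof of Lemma \ref{lem-3-2}, and this interplay ($\nu=\bar\nu\delta^l$ feeding into a smallness condition that again constrains $\delta$) is where the proof is technically heaviest; everything downstream is a direct invocation of Lemma \ref{lem-3-2}.
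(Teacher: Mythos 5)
Your overall architecture (propagate the time-slice measure information forward, then invoke Lemma \ref{lem-3-2}) is reasonable, but Step 1 as written has a genuine gap, in two respects. First, the logarithmic energy estimate you rely on is nowhere established in the paper — only the Caccioppoli inequality of Lemma \ref{lem-3-1} is available — and you yourself flag that for $\partial_t(|u|^{p-2}u)$ the logarithmic test function does not produce a clean $\partial_t$ of a single nonnegative quantity; asserting that this "can be done carefully" is exactly the missing proof. Second, and more seriously, even granting such an estimate the quantitative bookkeeping does not close at the level $\tfrac14 M$. A logarithmic lemma yields, at each later time, a bound of the form $|\{u(\cdot,t)\le 2^{-n}M\}\cap K_\rho|\le \gamma\,\nu_0|K_\rho|+\tfrac{\gamma\delta}{\sigma^p n}|K_\rho|$, and to beat the De Giorgi threshold $\nu=\bar\nu\delta^{l}$ of Lemma \ref{lem-3-2} you need $n\gtrsim \delta^{1-l}$. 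Since the exponent $l$ coming out of Lemma \ref{lem-3-2} is in general larger than $1$, this forces $n\to\infty$ as $\delta\to 0$, so the working level degrades to $2^{-n}M$ with $n$ depending on $\delta$; you would obtain $u\ge \eta M$ for some small data-dependent $\eta$, not $u\ge\tfrac14 M$ with $c$ fixed at $\tfrac12$ as you claim. Relatedly, you never say what actually determines $\delta$: in your scheme nothing pins it down, whereas it must be fixed before $\nu_0$ can be read off.

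For contrast, the paper avoids the logarithmic estimate entirely. It runs the De Giorgi iteration of Lemma \ref{lem-3-2} directly on the forward cylinders $Q_j=K_j\times(0,\delta\rho^{sp}]$ with a time-independent cutoff, so that the Caccioppoli inequality \eqref{3.1} retains the initial-slice term $\int_{K_j}\bm{h}_-(u(\cdot,0),\widehat{k}_j)\,dx$; this term is bounded by $2^{N+p}\nu_0 M^p(\delta\rho^{sp})^{-1}|Q_j|$ using \eqref{3.7}. Then a dichotomy: if $|\{u\le k_j\}\cap Q_j|\le \tfrac{\nu_0}{\delta}|Q_j|$ at some step, Lemma \ref{lem-3-2} applies directly (with $\nu_0\le 2^{-N}\bar\nu\delta^{l+1}$); otherwise the initial-slice term is absorbed into the $y_j$-term and the resulting iteration converges \emph{unconditionally}, because the short time height contributes the factor $\delta^{\ell}$ in the recursion and $\delta$ is chosen small precisely to guarantee $y_0+z_0^{1+\kappa}\le(2\gamma\delta^{\ell})^{-(1+\kappa)/\varsigma}2^{-(N+3p+2)(1+\kappa)/\varsigma^2}$. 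That is what fixes $\delta$, and only afterwards is $\nu_0$ chosen via \eqref{3.14}. If you want to salvage your route, you would need to both prove the doubly nonlinear nonlocal logarithmic lemma and accept a conclusion of the form $u\ge\eta M$ rather than $u\ge\tfrac14 M$; the paper's case-splitting argument is the cleaner way to keep the fixed constant $\tfrac14$.
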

\begin{proof}
Without loss of generality, we may assume $(x_0,t_0)=(0,0)$. For $j=0,1,2,\ldots$, we denote $\rho_j=\rho+\frac{\rho}{2^j}$ and use the identical notations of $k_j,\widehat{k}_j, \widehat{\rho}_j,K_j,\widehat{K}_j$, as in Lemma \ref{lem-3-2}. Besides, the cylinders with fixed height $\delta\rho^{sp}$ are denoted by 
\begin{align*}
Q_j=K_j \times\left(0,\delta \rho^{sp}\right], \quad \widehat{Q}_j=\widehat{K}_j \times\left(0,\delta \rho^{sp}\right],
%\widetilde{Q}_j=\widetilde{K}_j \times\left(0,\delta \rho^{sp}\right], 
\end{align*}
where $\delta\in(0,1)$ will be chosen later. We adopt the cutoff function $\psi(x,t)\equiv\psi(x)$ in $K_j$, which meets 
\begin{align*}
0<\psi\leq 1,\quad\psi(x)=0 \text{ in } \mathbb{R}^N\backslash \widehat{K}_j, \quad \psi(x)= 1 \text{ in } {K}_{j+1}, \quad |\nabla\psi|\leq \frac{2^{j+4}}{\rho}.
\end{align*}
Exploiting the energy estimate \eqref{3.1} with this structure leads to
\begin{align}
\label{3.8}
&\quad \operatorname*{ess\sup}_{0<t<\delta \rho^{sp}} \int_{K_{j+1}} (|u|+\widehat{k}_j)^{p-2}(u-\widehat{k}_j)_{-}^2\,dx\nonumber\\
&\quad+\iint_{Q_{j+1}}\int_{K_{j+1}}\frac{|(u-\widehat{k}_j)_-(x,t)-(u-\widehat{k}_j)_-(y,t)|^p}{|x-y|^{N+sp}} \,dxdydt \nonumber\\
&\leq \gamma\iint_{Q_j} \int_{K_j} \max \{(u-\widehat{k}_j)_-^p(x, t),(u-\widehat{k}_j)_-^p(y,t)\}|\psi(x)-\psi(y)|^p\,d\mu dt \nonumber\\
&\quad+\gamma \int_{0}^{\delta\rho^{sp}}\int_{\mathbb{R}^N \backslash K_j}\int_{K_j} \frac{(u-\widehat{k}_j)_-^{p-1}(y,t)}{|x-y|^{N+sp}}(u-\widehat{k}_j)_-(x,t)\psi^p\,dxdydt\nonumber\\
&\quad+\gamma \int_{K_j}(|u(x,0)|+\widehat{k}_j)^{p-2}(u(x,0)-\widehat{k}_j)_-^2\,dx.
\end{align}
We denote
\begin{align*}
y_j=\frac{\left|\{u(x,t)\leq k_j\}\cap Q_j\right|}{\left|Q_j\right|}
\end{align*}
and 
$$
 z_j=\left[\mint_0^{\delta \rho^{s p}}\left(\frac{\left|A_-\left(u;0,\rho_j;k_j\right)\right|}{\left|K_j\right|}\right)^{\frac{q}{q-(p-1)}}\,dt\right]^{\frac{q-(p-1)}{q(1+\kappa)}}.
 $$

Via direct computations similar to $I_2$ and $I_3$ in Lemma \ref{lem-3-2}, the first and second terms on the right-hand side of \eqref{3.8} are bounded by 
\begin{align}
\label{3.9}
\gamma 2^{j(N+p)}\left(\frac{M^p}{\rho^{sp}}y_j|Q_j|+\frac{M^p}{\rho^{sp}} z_j^{1+\kappa}\left|Q_j\right|\right),
\end{align}
where we need enforce 
\begin{align*}
\frac{\rho^{\frac{N\kappa}{p-1}}}{\delta^{\frac{1}{q}}}\widetilde{\mathrm{Tail}}(u_-;0,R;0,\delta\rho^{sp})=\left(\frac{\rho}{R}\right)^{\frac{sp}{p-1}}\mathrm{Tail}(u_-;0,R;0,\delta\rho^{sp})\leq M.
\end{align*}
Taking into account $\rho<\rho_j\leq 2\rho$, $M/2<k_j\leq M$, along with the condition \eqref{3.7}, we proceed to deal with the last integral term in \eqref{3.8} as
\begin{align}
\label{3.10}
\int_{K_j}(|u(x,0)|+\widehat{k}_j)^{p-2}(u(x,0)-\widehat{k}_j)_-^2\,dx&\leq\int_{K_j}(|u(x,0)|+\widehat{k}_j)^{p-1}(u(x,0)-k_j)_-\,dx\nonumber\\
&\leq2^p M^p\left|\{u(\cdot,0)<M\}\cap K_j\right|\nonumber\\
& \leq 2^p M^p\left|\{u(\cdot, 0)<M\}\cap K_{2 \rho}\right| \nonumber\\
& \leq  2^p M^p\nu_0\left|K_{2 \rho}\right|\nonumber\\ 
&\leq  2^{N+p} M^p\nu_0\left|K_j\right|=\frac{2^{N+p}\nu_0 M^p }{\delta \rho^{sp}}\left|Q_j\right|,
\end{align}
the parameter $\nu_0$ is to be fixed. 
%Let us focus on the first term on the left-hand side in \eqref{3.8}, it holds that
%\begin{align}
%\label{3.11}
%\operatorname*{ess\sup}_{0<t<\delta \rho^{sp}} \int_{K_{j+1}} (|u|+\widehat{k}_j)^{p-2}(u-\widehat{k}_j)_-^2\,dx\geq\frac{\gamma}{2^{j|p-2|}}\operatorname*{ess\sup}_{0<t<\delta\rho^{sp}} \int_{K_{j+1}}(u-\widehat{k}_j)_-^p(x,t)\,dx.
%\end{align}
Inserting estimates \eqref{3.9}--\eqref{3.10} into \eqref{3.8} infers 
\begin{align}
\label{3.12}
&\quad\operatorname*{ess\sup}_{0<t<\delta \rho^{sp}} \int_{K_{j+1}} (|u|+\widehat{k}_j)^{p-2}(u-\widehat{k}_j)_{-}^2\,dx\nonumber\\
&\quad+\int_0^{\delta \rho^{sp}} \int_{K_{j+1}} \int_{K_{j+1}} \frac{\left|(u-\widehat{k}_j)_-(x,t)-(u-\widehat{k}_j)_-(y,t)\right|^p}{|x-y|^{N+sp}}\,dxdydt\nonumber\\
&\leq \gamma 2^{j(N+p)}\left(\frac{M^p}{\rho^{sp}}y_j|Q_j|+\frac{M^p}{\rho^{sp}} z_j^{1+\kappa}\left|Q_j\right|\right)+ \frac{2^{N+p}\nu_0 M^p}{\delta \rho^{sp}} \left|Q_j\right|.
\end{align}

At this point, we shall consider two separate cases. For the first case, we assume 
\begin{align}
\label{3.13}
\left|\{u(x,t)<k_j\} \cap Q_j\right| \leq \frac{\nu_0}{\delta}\left|Q_j\right| \quad \text{for some }j\in\mathbb{N}_0.
\end{align}
Due to $\rho<\rho_j\leq 2\rho$ and $k_j\geq\frac{1}{2}M$, it tells from \eqref{3.13} that
\begin{align*}
\left|\left\{u<\frac{M}{2}\right\}\cap\left(0, \delta \rho^{sp}\right)+Q_{\rho}(\delta) \right| \leq \frac{\nu_0}{\delta}\left|Q_j\right| \leq 2^N \frac{\nu_0}{\delta}\left|Q_{\rho}(\delta)\right|.
\end{align*}
If $\delta$ is determined, we can select $\nu_0$ to be small enough to satisfy
\begin{align}
\label{3.14}
\nu_0\leq 2^{-N}\bar{\nu}\delta^{l+1},
\end{align}
where $\bar{\nu}$ and $l$ are given in Lemma \ref{lem-3-2} and depends only on $N,p,s,q,\Lambda$. With this choice, it permits us to further utilize Lemma \ref{lem-3-2} to get
\begin{align*}
u \geq \frac{1}{4} M \quad \text { a.e. in }\left(0, \delta \rho^{sp}\right)+Q_{\frac{1}{2} \rho}(\delta).
\end{align*}
Note that the quantitative of $\delta$ is not required yet.
 
For the second case, we suppose the contrary of \eqref{3.13}, that is,
\begin{align}
\label{3.15}
\left|\{u(x,t)\leq k_j\} \cap Q_j\right|> \frac{\nu_0}{\delta}\left|Q_j\right| \quad \text{for some } j\in\mathbb{N}_0.
\end{align}
A combination of \eqref{3.12} and \eqref{3.15} shows that
\begin{align*}
&\quad\operatorname*{ess\sup}_{0<t<\delta \rho^{sp}} \int_{K_{j+1}} (|u|+\widehat{k}_j)^{p-2}(u-\widehat{k}_j)_{-}^2\,dx\nonumber\\
&\quad+\int_0^{\delta \rho^{sp}} \int_{K_{j+1}} \int_{K_{j+1}} \frac{\left|(u-\widehat{k}_j)_-(x,t)-(u-\widehat{k}_j)_-(y,t)\right|^p}{|x-y|^{N+sp}}\,dxdydt\nonumber\\
&\leq \gamma 2^{j(N+p)}\left(\frac{M^p}{\rho^{sp}}y_j|Q_j|+\frac{M^p}{\rho^{sp}} z_j^{1+\kappa}\left|Q_j\right|\right).
\end{align*}
We now perform a similar iteration procedure to that in Lemma \ref{lem-3-2} to yield
\begin{align*}
 y_{j+1} \leq \gamma 2^{j(N+3p+2)} \delta^\ell\left(y_j^{1+\frac{sp}{N+s p}}+z_j^{1+\kappa} y_j^{\frac{sp}{N+sp}}\right)
\end{align*}
and
\begin{align*}
	 z_{j+1} \leq\gamma 2^{j(N+3p+2)} \delta^\ell\left(y_j+z_j^{1+\kappa}\right),
\end{align*}
where $\ell$ is given in \eqref{3.6} and the constant $\gamma$ depends only on $N,p,s,q,\Lambda$.
In view of Lemma \ref{lem-2-4}, we know $Y_j\rightarrow 0$ as $j\rightarrow\infty$ if
\begin{align*}
y_0+z_0^{1+\kappa}\leq \left(2\gamma\delta^\ell\right)^{-\frac{1+\kappa}{\varsigma}}2^{-(N+3p+2)\frac{1+\kappa}{\varsigma^2}},
\end{align*}
with $\varsigma=\min\{\kappa,\frac{sp}{N+sp}\}$. Hence, we can find $\delta\in(0,1)$ depending on $N,p,s,q,\Lambda$ such that 
\begin{align*}
u\geq\frac{1}{4} M \quad \text { a.e. in }\left(0, \delta \rho^{sp}\right)+Q_{\frac{1}{2} \rho}(\delta).
\end{align*}
Since $\delta$ is specified, we can determine $\nu_0$ from \eqref{3.14}, which depends only on $N,p,s,q,\Lambda$. Joining the above two cases, the proof is completed.
\end{proof}

The forthcoming lemma shows the measure propagation of positivity.

\begin{lemma}
	\label{lem-3-4}
Suppose $p>1$ and $u$ is a local weak supersolution to \eqref{1.1} satisfying $u\geq 0$ in $\mathscr{Q}$. Assume further that $u\in L^q_{\rm{loc}}(0,T;L^{sp}_{p-1}(\mathbb{R}^N))$ with $q>p-1$. If there holds
\begin{align}
\label{3.16}
\left|\{u(\cdot, t_0) \geq M \}\cap K_{\rho}(x_0)\right| \geq \alpha\left|K_{\rho}\right|
\end{align}
for constants $M>0$ and $\alpha\in(0,1)$, then one can find $\delta$ and $\varepsilon\in(0,1)$ depending only on $N,p,s,\Lambda$ and $\alpha$, such that either
\begin{align*}
\left(\frac{\rho}{R}\right)^{\frac{sp}{p-1}}\mathrm{Tail}(u_-;x_0,R;t_0,t_0+\delta\rho^{sp})>M,
\end{align*}
or
\begin{align*}
\left|\{u(\cdot, t)\geq\varepsilon M\}\cap K_\rho(x_0)\right| \geq \frac{\alpha}{2}|K_\rho| \quad \text { for all } t \in\left(t_0, t_0+\delta\rho^{s p}\right],
\end{align*}
with imposing $K_\rho(x_0)\times(t_0,t_0+\delta\rho^{sp}]\subset\mathscr{Q}$. Moreover, we can trace the power of dependence as $\varepsilon=\frac{1}{8}\alpha$ and $\delta=\bar{\delta}\alpha^{N+p+1}$ with some $\bar{\delta}\in(0,1)$ depending only on $N,p,s,\Lambda$.
\end{lemma}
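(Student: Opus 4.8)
The plan is to deduce the measure estimate from a single application of the energy inequality \eqref{3.1}, with the truncation level comparable to $M$ and with a \emph{time-independent} spatial cutoff; freezing $\psi$ in time is what makes the only ``backward memory'' term in \eqref{3.1} the single slice $\{t=t_0\}$. Assume $(x_0,t_0)=(0,0)$. Since the conclusion is a dichotomy, I would suppose the first alternative fails, that is,
\begin{align*}
\left(\frac{\rho}{R}\right)^{\frac{sp}{p-1}}\mathrm{Tail}(u_-;0,R;0,\delta\rho^{sp})\le M,
\end{align*}
and establish the second. Note that $-u$ is a local weak subsolution, so \eqref{3.1} is available for $-u$; and since $u\ge 0$ on $\mathscr{Q}\supset K_R\times(0,\delta\rho^{sp}]$, we have $u_-(\cdot,t)\equiv 0$ on $K_R$ throughout, so every ``outside'' integral in \eqref{3.1} collapses (up to a geometric constant coming from $|x-y|^{-N-sp}\lesssim|y-x_0|^{-N-sp}$) to an integral over $\mathbb{R}^N\setminus K_R$ controlled by $\mathrm{Tail}(u_-;0,R;\cdot)$.

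Fix $\sigma\in(0,1)$, to be chosen comparable to $\alpha$, and a cutoff $\psi=\psi(x)\in(0,1]$ with $\psi\equiv1$ on $K_{(1-\sigma)\rho}$, $\mathrm{supp}\,\psi\subset K_{(1-\sigma/2)\rho}$, $|\nabla\psi|\le\gamma(\sigma\rho)^{-1}$. Applying \eqref{3.1} to $-u$ with $k=-M$ over $K_\rho\times(0,\delta\rho^{sp}]$ and discarding the two nonnegative nonlocal terms on the left, the term with $|\partial_t\psi^p|$ drops out since $\partial_t\psi\equiv0$, and the only initial contribution is $\int_{K_\rho}\bm{h}_+(-u,-M)(\cdot,0)\,\psi^p\,dx$. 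Here $\bm{h}_+(-a,-M)=0$ for $a\ge M$ and $\bm{h}_+(-\,\cdot\,,-M)$ is nonincreasing on $[0,M]$, so with $\beta_+:=\bm{h}_+(0,-M)$ and \eqref{3.16},
\begin{align*}
\int_{K_\rho}\bm{h}_+(-u,-M)(\cdot,0)\,\psi^p\,dx\le\beta_+\,|\{u(\cdot,0)<M\}\cap K_\rho|\le\beta_+(1-\alpha)|K_\rho|.
\end{align*}
The gradient term is bounded, using $(M-u)_+\le M$ on $K_\rho$, by $\gamma M^p(\sigma\rho)^{-sp}|K_\rho|\cdot\delta\rho^{sp}=\gamma M^p\delta\sigma^{-sp}|K_\rho|$; for the tail term one splits $(M-u(y))_+\le M+u_-(y)$ for $|y|\ge\rho$, bounds the ``$M$''-part the same way, and for the ``$u_-$''-part applies Hölder in time with exponents $\frac{q}{p-1}$ and its conjugate together with the failed first alternative, producing a bound $\gamma M^p\delta\sigma^{-sp}|K_\rho|$ as well. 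Hence, for a.e.\ $t\in(0,\delta\rho^{sp}]$,
\begin{align*}
\int_{K_{(1-\sigma)\rho}}\bm{h}_+(-u,-M)(\cdot,t)\,dx\le\beta_+(1-\alpha)|K_\rho|+\gamma M^p\frac{\delta}{\sigma^{sp}}|K_\rho|.
\end{align*}

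The decisive observation is that on $\{x\in K_{(1-\sigma)\rho}:u(x,t)<\varepsilon M\}$ one has $0\le u(x,t)<\varepsilon M$, so $\bm{h}_+(-u,-M)(x,t)\ge\bm{h}_+(-\varepsilon M,-M)=:\beta_-$, while a direct computation from \eqref{2.1} gives $\beta_+/\beta_-=\big(1-p\varepsilon^{p-1}+(p-1)\varepsilon^{p}\big)^{-1}\to1$ as $\varepsilon\to0$. Choosing $\varepsilon$ equal to a small fraction of $\alpha$ (tracking the constants yields $\varepsilon=\alpha/8$) so that $\beta_+/\beta_-\le1+\alpha/4$, and dividing the last display by $\beta_-\ge c(p)M^p$, we get, for a.e.\ $t\in(0,\delta\rho^{sp}]$,
\begin{align*}
|\{u(\cdot,t)<\varepsilon M\}\cap K_{(1-\sigma)\rho}|\le\Big(1+\frac{\alpha}{4}\Big)(1-\alpha)|K_\rho|+\gamma\frac{\delta}{\sigma^{sp}}|K_\rho|\le\Big(1-\frac{3\alpha}{4}\Big)|K_\rho|+\gamma\frac{\delta}{\sigma^{sp}}|K_\rho|.
\end{align*}
Since $|K_\rho\setminus K_{(1-\sigma)\rho}|\le N\sigma|K_\rho|$, taking $\sigma=c(N)\alpha$ and then $\delta=\bar{\delta}\alpha^{N+p+1}$ small enough that $N\sigma\le\alpha/8$ and $\gamma\delta\sigma^{-sp}\le\alpha/8$ yields $|\{u(\cdot,t)<\varepsilon M\}\cap K_\rho|\le(1-\alpha/2)|K_\rho|$ for all $t\in(0,\delta\rho^{sp}]$, which is the claim.

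The main obstacle — and the point requiring care — is the coefficient in front of $(1-\alpha)$: using Lemma \ref{lem-2-2} crudely on both the initial slice and the sublevel set $\{u(\cdot,t)<\varepsilon M\}$ would leave a factor $\gamma^2>1$ there, which is fatal once $\alpha$ is small. The argument survives only because the initial‑data term in \eqref{3.1} carries coefficient exactly $1$ and because one compares the \emph{same} functional $\bm{h}_+(-\,\cdot\,,-M)$ on both sides, so the only loss is its ratio at two nearby arguments, which tends to $1$ with $\varepsilon$; all genuinely nonlocal error terms come with the factor $\delta\rho^{sp}$ and are absorbed by choosing $\delta$ polynomially small in $\alpha$.
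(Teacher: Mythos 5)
Your argument is essentially the paper's: a single application of the energy estimate \eqref{3.1} (for the supersolution, i.e.\ for $-u$ at level $-M$) over $K_\rho\times(0,\delta\rho^{sp}]$ with a time-independent cutoff vanishing outside $K_{(1-\sigma/2)\rho}$, the initial slice controlled by \eqref{3.16}, the gradient and tail contributions of size $\gamma\delta\sigma^{-(N+p)}M^p|K_\rho|$ under the negation of the tail alternative, followed by the comparison of the boundary functional at the two truncation arguments $0$ and $\varepsilon M$. In fact you supply the one step the paper outsources to Lemma~4.4 of \cite{P24}, namely the explicit ratio $\beta_+/\beta_-=\bigl(1-p\varepsilon^{p-1}+(p-1)\varepsilon^p\bigr)^{-1}$, and your remark that the argument only survives because the initial-data term enters with coefficient one and the same functional is compared on both sides (rather than invoking Lemma \ref{lem-2-2} twice) is exactly the right point.

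One quantitative caveat, which you share with the lemma as stated: the deficit $\beta_+-\beta_-\approx\varepsilon^{p-1}M^p/(p-1)$ scales like $\varepsilon^{p-1}$, so the requirement $\beta_+/\beta_-\le 1+\alpha/4$ forces $\varepsilon\lesssim\alpha^{1/(p-1)}$; the tracing $\varepsilon=\alpha/8$ is therefore adequate only for $p\ge2$, while for $1<p<2$ one should take $\varepsilon\sim\alpha^{1/(p-1)}$ (this only changes the exponent $\vartheta$ in Proposition \ref{pro-4-1}, not the structure of the proof).
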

\begin{proof}
Assume that $(x_0,t_0)=(0, 0)$. Let us use the Caccioppoli-type inequality \eqref{3.1} in $Q=K_{\rho} \times\left(0,\delta\rho^{sp}\right]$ with $k=M$ to achieve the proof. Choose the cutoff function $\psi(x,t)\equiv\psi(x)\in C_0^\infty \left(K_{\frac{\rho(2-\sigma)}{2}}\right)$ to satisfy $\psi(x)=1$ in $K_{(1-\sigma) \rho}$ and $|\nabla\psi|\leq(\sigma\rho)^{-1}$, where $\sigma$ is to be selected. As a result, we obtain for any $0<t<\delta\rho^{sp}$ that
\begin{align}
\label{3.17}
& \quad \int_{K_{\rho} \times\{t\}} \int_u^M|s|^{p-2}(s-M)_-\,ds \psi^p\,dx \nonumber \\
&\leq \int_{K_{\rho} \times\{0\}} \int_u^M|s|^{p-2}(s-M)_-\,ds \psi^p\,dx \nonumber\\
&\quad+\gamma \iint_Q \int_{K_{\rho}} \max \left\{(u-M)_-^p(x,t), (u-M)_-^p(y,t)\right\} \frac{|\psi(x)-\psi(y)|^p}{|x-y|^{N+sp}}\,dxdydt\nonumber\\
&\quad+\gamma \int_{0}^{\delta\rho^{sp}}\int_{\mathbb{R}^N \backslash K_\rho}\int_{K_\rho} \frac{(u-M)_-^{p-1}(y,t)}{|x-y|^{N+sp}}(u-M)_-(x,t)\psi^p(x,t)\,dxdydt.
\end{align}
Thanks to the condition \eqref{3.16} and $u\geq 0$ in $\mathscr{Q}$, we have
\begin{align*}
\int_{K_{\rho} \times\{0\}} \int_u^M|s|^{p-2}(s-M)_-\,ds \psi^p\,dx \leq(1-\alpha)|K_\rho| \int_0^M|s|^{p-2}(s-M)_-\,ds.
\end{align*}
We continue to address the second integral term on the right-hand side of \eqref{3.17} by using the properties of $\psi(x)$ as
\begin{align*}
\iint_Q \int_{K_{\rho}} \max \left\{(u-M)_-^p(x,t), (u-M)_-^p(y,t)\right\} \frac{|\psi(x)-\psi(y)|^p}{|x-y|^{N+sp}}\,dxdydt\leq \gamma \frac{\delta M^p}{\sigma^p}|K_{\rho}|.
\end{align*}
For the last term, we know
\begin{align*}
\frac{|y|}{|y-x|}\leq 1+\frac{|x|}{|y-x|}\leq \frac{2}{\sigma}
\end{align*}
for all $|x|\leq \frac{\rho(2-\sigma)}{2}$ and $|y|\geq \rho$. This along with H\"{o}lder's inequality gives that
\begin{align*}
&\quad\int_{0}^{\delta\rho^{sp}}\int_{\mathbb{R}^N \backslash K_\rho}\int_{K_\rho} \frac{(u-M)_-^{p-1}(y,t)}{|x-y|^{N+sp}}(u-M)_-(x,t)\psi^p(x,t)\,dxdydt\\
&\leq \frac{\gamma}{\sigma^{N+sp}} \left(\frac{M^p|Q|}{\rho^{sp}} +\int_{0}^{\delta\rho^{sp}}\int_{\mathbb{R}^N \backslash K_R}\int_{K_\rho} \frac{u_-^{p-1}(y,t)}{|x-y|^{N+sp}}(u-M)_-(x,t)\,dxdydt \right)\\
&\leq  \frac{\gamma}{\sigma^{N+sp}} \left(\frac{M^p|Q|}{\rho^{sp}}+(\delta\rho^{sp})^{\frac{q-(p-1)}{q}} M|K_\rho|[\widetilde{\mathrm{Tail}}(u_-;0,R;0,\delta\rho^{sp})]^{p-1}\right) \\
&\leq \gamma \frac{\delta M^p}{\sigma^{N+sp}}|K_\rho|,
\end{align*}
if imposing
\begin{align*}
\frac{\rho^{\frac{N\kappa}{p-1}}}{\delta^{\frac{1}{q}}}\widetilde{\mathrm{Tail}}(u_-;0,R;0,\delta\rho^{sp})=\left(\frac{\rho}{R}\right)^{\frac{sp}{p-1}}\mathrm{Tail}(u_-;0,R;0,\delta\rho^{sp})\leq M.
\end{align*}
Collecting these estimates together leads to
\begin{align*}
&\quad \int_{K_{\rho} \times\{t\}} \int_u^M|s|^{p-2}(s-M)_-\,ds \psi^p\,dx \\
& \leq 
(1-\alpha)|K_\rho| \int_0^M|s|^{p-2}(s-M)_-\,ds+\gamma \frac{\delta M^p}{\sigma^{N+p}}|K_{\rho}|.
\end{align*}
Hereafter, the conclusion can be derived following the same steps as in the proof of Lemma 4.4 in \cite{P24}. We omit the details for short.
%For representing the dependence of $\delta$ and $\varepsilon$ more clearly, we give the details here.
\end{proof}

What follows is a measure shrinking lemma that investigates the measure density information in the cylinder from the measure condition in a ball.

\begin{lemma}
\label{lem-3-5}
Suppose $p>1$ and $u$ is a local weak supersolution to \eqref{1.1} satisfying $u\geq 0$ in $\mathscr{Q}$. We assume additionally that $u\in L^q_{\rm{loc}}(0,T;L^{sp}_{p-1}(\mathbb{R}^N))$ with $q>p-1$. Let parameters $M>0$, $\delta,\sigma\in(0,\frac{1}{2})$ and the cylinder $Q_{2\rho}(\delta)$ be included in $ \mathscr{Q}$. If we have
\begin{align*}
\left|\{u(\cdot,t)>M  \}\cap K_{\rho}(x_0)\right|  \geq \alpha\left|K_{\rho}\right| \quad \text { for all } t \in\left(t_0-\delta \rho^{sp}, t_0\right],
\end{align*}
then whenever
\begin{align*}
\left(\frac{\rho}{R}\right)^{\frac{sp}{p-1}}\mathrm{Tail}(u_-;x_0,R;t_0-\delta\rho^{sp},t_0)\leq\sigma M,
\end{align*}
there exists a constant $\gamma(N,p,s,\Lambda)>0$ such that
\begin{align*}
\left|\{u \leq \sigma M \}\cap Q_{\rho}(\delta)\right| \leq \gamma \frac{\sigma^{p-1}}{\delta \alpha}\left|Q_{\rho}(\delta)\right|.
\end{align*}

\end{lemma}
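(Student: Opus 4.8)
The plan is to use the Caccioppoli-type inequality \eqref{3.1} once more, this time with the truncation level $k=\sigma M$ and with a purely spatial cutoff function $\psi(x)$ that equals $1$ on $K_\rho(x_0)$, vanishes outside $K_{2\rho}(x_0)$ and satisfies $|\nabla\psi|\le\gamma/\rho$. Discarding the nonnegative terms on the left-hand side of \eqref{3.1} except the $\operatorname*{ess\sup}_t$ term, and using Lemma \ref{lem-2-2} to replace $\bm h_-(u,\sigma M)$ by $(|u|+\sigma M)^{p-2}(u-\sigma M)_-^2$, I would retain the supremum-in-time of $\int_{K_{2\rho}}(|u|+\sigma M)^{p-2}(u-\sigma M)_-^2\psi^p\,dx$. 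The right-hand side of \eqref{3.1} has four pieces: the $\partial_t\psi$ term vanishes since $\psi$ is time-independent; the initial-slice term at $t_0-\delta\rho^{sp}$ is bounded trivially by $\gamma(\sigma M)^p|K_{2\rho}|$; the gradient-of-$\psi$ term is $\le\gamma\delta\rho^{sp}\cdot(\sigma M)^p/\rho^{sp}\cdot|K_{2\rho}|=\gamma\delta(\sigma M)^p|K_{2\rho}|$ (only the set $\{u<\sigma M\}$ contributes, and there $(u-\sigma M)_-\le\sigma M$); and the nonlocal tail term is handled exactly as in Lemmas \ref{lem-3-2}--\ref{lem-3-4}: splitting $\mathbb R^N\setminus K_\rho$ into $K_R\setminus K_\rho$ (contributing $\gamma\delta(\sigma M)^p|K_{2\rho}|$) and $\mathbb R^N\setminus K_R$ (controlled by Hölder's inequality and the hypothesis $(\rho/R)^{sp/(p-1)}\mathrm{Tail}(u_-;x_0,R;\cdot)\le\sigma M$, again giving $\gamma\delta(\sigma M)^p|K_{2\rho}|$). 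All four contributions are thus $\le\gamma\delta(\sigma M)^p|K_{2\rho}|$, so I obtain
\begin{align}
\label{eq-plan-shrink}
\operatorname*{ess\sup}_{t_0-\delta\rho^{sp}<t<t_0}\int_{K_\rho(x_0)}(|u|+\sigma M)^{p-2}(u-\sigma M)_-^2\,dx\le\gamma\,\delta\,(\sigma M)^p\,|K_\rho|.
\end{align}

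Next I would pass from this $L^2$-type bound to an $L^1$ measure estimate. On the set $\{u\le\tfrac12\sigma M\}$ we have $(u-\sigma M)_-\ge\tfrac12\sigma M$, and also $(|u|+\sigma M)^{p-2}\ge\gamma^{-1}(\sigma M)^{p-2}$ when $p\ge2$, while when $1<p<2$ one uses $(|u|+\sigma M)^{p-2}\ge(2\sigma M)^{p-2}$ on that same set since there $|u|\le\sigma M$; in either case $(|u|+\sigma M)^{p-2}(u-\sigma M)_-^2\ge\gamma^{-1}(\sigma M)^p$ pointwise on $\{u\le\tfrac12\sigma M\}$. Therefore, for each $t$ in the time interval,
\[
\big|\{u(\cdot,t)\le\tfrac12\sigma M\}\cap K_\rho(x_0)\big|\le\gamma\,\delta\,|K_\rho|,
\]
and integrating in $t$ over $(t_0-\delta\rho^{sp},t_0)$ gives $|\{u\le\tfrac12\sigma M\}\cap Q_\rho(\delta)|\le\gamma\delta\cdot\delta\rho^{sp}|K_\rho|=\gamma\delta|Q_\rho(\delta)|$. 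Since the statement asks for the level $\sigma M$ rather than $\tfrac12\sigma M$, I would simply relabel (replace $\sigma$ by $2\sigma$ at the outset, absorbing the harmless factor into $\gamma$), or equivalently note that the argument applies verbatim with $(u-\sigma M)_-\ge\tfrac12\sigma M$ replaced by working at level $\sigma M$ and using the set $\{u\le\sigma M/2\}\subset\{u\le\sigma M\}$—but this is too crude. To recover the sharper $\sigma^{p-1}/(\delta\alpha)$ dependence claimed, I instead keep track of the $\alpha$-information: the hypothesis says $|\{u(\cdot,t)>M\}\cap K_\rho|\ge\alpha|K_\rho|$, which is not directly used above but is exactly what forces the constant to carry a $1/\alpha$; in fact one does \emph{not} need $\alpha$ for \eqref{eq-plan-shrink} and the crude bound $\gamma\delta|Q_\rho(\delta)|$ already follows, and $\gamma\delta\le\gamma\sigma^{p-1}/(\delta\alpha)$ fails in general—so the correct route is the one below.

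The honest route, following the standard measure-shrinking scheme (cf.\ \cite{P24, L24}), is to estimate instead $\iint_{Q_\rho(\delta)}(u-\sigma M)_-^{p}\,dxdt$ or to use a logarithmic test function, but here the cleanest is: from \eqref{eq-plan-shrink} with the left-hand side bounded \emph{below} by discarding nothing, one has $\operatorname*{ess\sup}_t\int_{K_\rho}(u-\sigma M)_-^p\,dx\le\gamma\delta(\sigma M)^p|K_\rho|$ after converting (using $(|u|+\sigma M)^{p-2}(u-\sigma M)_-^2\ge\gamma^{-1}(u-\sigma M)_-^p$, valid since on $\{(u-\sigma M)_->0\}$ we have $(u-\sigma M)_-\le\sigma M\le|u|+\sigma M$). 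Then the set where $u\le\sigma M/2$ has, at each time, measure $\le\gamma\delta|K_\rho|$; but to bring in $\alpha$ and the exponent $p-1$ rather than $p$, one replaces the second term of \eqref{3.1} argument by the sharper Caccioppoli estimate that yields $\sigma^{p-1}$ (the nonlocal positivity term $\int\psi^p w_+\int w_-^{p-1}/|x-y|^{N+sp}$ on the left of \eqref{3.1}, combined with the lower bound on $|\{u>M\}|$, produces the factor $\alpha$ in the denominator). Concretely, I would run the computation keeping that third left-hand term, bound it below using the assumed positivity set $\{u(\cdot,t)>M\}$ of measure $\ge\alpha|K_\rho|$ to get a term $\gtrsim\alpha M^{p-1}\int_{K_\rho}(u-\sigma M)_-\,dx$ integrated in time, and match it against the right-hand side $\lesssim\delta M^p\sigma^{?}|K_\rho|\delta\rho^{sp}$, solving for $|\{u\le\sigma M/2\}\cap Q_\rho(\delta)|$. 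The main obstacle is precisely this bookkeeping: tracking the exact powers of $\sigma$, $\delta$, $\alpha$ through the nonlocal tail splitting and the singular/degenerate conversions of Lemma \ref{lem-2-2}, so that one lands on $\gamma\sigma^{p-1}/(\delta\alpha)$ and not a weaker power. Everything else—cutoff choice, vanishing of the $\partial_t\psi$ term, the Hölder estimate of the far tail under the stated smallness hypothesis—is routine and parallels Lemmas \ref{lem-3-2}, \ref{lem-3-3}, \ref{lem-3-4}. I would therefore structure the proof as: (i) set up \eqref{3.1} with $k=\sigma M$ and spatial $\psi$; (ii) estimate the four right-hand terms, each $\lesssim\delta(\sigma M)^p|K_\rho|$; (iii) retain the nonlocal positivity term on the left and bound it below by $\alpha M^{p-1}\cdot\tfrac12\sigma M\cdot|\{u\le\sigma M/2\}\cap Q_\rho(\delta)|/(\delta\rho^{sp})$ times the right geometric factor; (iv) rearrange to conclude, absorbing the relabeling $\sigma M/2\rightsquigarrow\sigma M$ into $\gamma$.
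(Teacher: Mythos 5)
Your final plan (steps (i)--(iv)) is exactly the paper's proof: apply the energy estimate with $k=\sigma M$ and a spatial cutoff, bound the right-hand side by $\gamma(\sigma M)^p|K_{2\rho}|=\gamma\frac{(\sigma M)^p}{\delta\rho^{sp}}|Q_\rho(\delta)|$, and keep the mixed nonlocal term on the left, bounding it below on $\{u\le\tfrac12\sigma M\}\times\{u>M\}$ by
\begin{align*}
\tfrac12\sigma M\,\bigl|\{u\le\tfrac12\sigma M\}\cap Q_\rho(\delta)\bigr|\cdot\frac{(\tfrac12M)^{p-1}\alpha|K_\rho|}{(4\rho)^{N+sp}}\gtrsim\frac{\alpha\sigma M^{p}}{\rho^{sp}}\bigl|\{u\le\tfrac12\sigma M\}\cap Q_\rho(\delta)\bigr|,
\end{align*}
which, upon dividing, gives precisely $\sigma^{p-1}/(\delta\alpha)$ (the hypothesis $\sigma<\tfrac12$ guarantees $(u-\sigma M)_+\ge M/2$ on $\{u>M\}$), so the bookkeeping you left open does close. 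Two small corrections: the initial-slice term at $t_0-\delta\rho^{sp}$ is only $\le\gamma(\sigma M)^p|K_{2\rho}|$ with no factor $\delta$ (contrary to your summary claim that all four right-hand contributions carry a $\delta$), and it is exactly this term that forces the $1/\delta$ in the final estimate; your entire first route via the $\operatorname*{ess\sup}$ term is, as you yourself note, a dead end and should simply be deleted.
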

\begin{proof}
Assume that $(x_0,t_0)=(0,0)$. We introduce $\psi(x,t)\equiv\psi(x)\in C_0^\infty(K_{\frac{3}{2}\rho})$, which  is a cutoff function fulfilling $\psi(x)=1$ in $K_\rho$ and $|\nabla\psi|\leq \rho^{-1}$. In such a way, Lemma \ref{lem-3-1} in the cylinder $K_{2 \rho} \times(-\delta \rho^{sp},0]$ with $k=\sigma M$ results in
\begin{align}
\label{3.18}
&\quad\iint_{Q_\rho(\delta)}  (u-k)_-(y,t)\int_{K_{2\rho}} \frac{(u-k)_+^{p-1}(x, t)}{|x-y|^{N+sp}}\,dxdydt\nonumber\\
&\leq  \gamma \int_{-\delta \rho^{sp}}^0 \int_{K_{2 \rho}} \int_{K_{2 \rho}} \max \left\{(u-k)_-^p(x,t),(u-k)_-^p(y,t)\right\} \frac{|\psi(x)-\psi(y)|^p}{|x-y|^{N+s p}}\,dxdydt \nonumber\\
&\quad+\gamma \int_{-\delta \rho^{sp}}^0\int_{\mathbb{R}^N \backslash K_{2\rho}} \int_{K_{2 \rho}} \frac{(u-k)_-^{p-1}(y,t)}{|x-y|^{N+sp}}(u-k)_-(x,t)\psi^p(x,t)\,dxdt\nonumber\\
&\quad +\int_{K_{2 \rho}}(|u(x,-\delta\rho^{sp})|+k)^{p-2}(u(x,-\delta\rho^{sp})-k)_-^2\,dx.
\end{align}
Proceeding similarly as in Lemma \ref{lem-3-4}, it is not hard to examine the upper bound of the terms on the right-hand side of \eqref{3.18} is
\begin{align*}
\gamma \frac{(\sigma M)^p}{\delta \rho^{s p}}|Q_{\rho}(\delta)|,
\end{align*} 
if enforcing 
\begin{align*}
\frac{\rho^{\frac{N\kappa}{p-1}}}{\delta^{\frac{1}{q}}}\widetilde{\mathrm{Tail}}(u_-;0,R;-\delta\rho^{sp},0)=\left(\frac{\rho}{R}\right)^{\frac{sp}{p-1}}\mathrm{Tail}(u_-;0,R;-\delta\rho^{sp},0)\leq\sigma M.
\end{align*}
When it comes to the left-hand side of \eqref{3.18}, we estimate 
\begin{align*}
&\quad\iint_{Q_\rho(\delta)}  (u-k)_-(y,t)\chi_{\{u(y,t)\leq\frac{1}{2}\sigma M\}}\int_{K_{2\rho}} \frac{(u-k)_+^{p-1}(x,t)\chi_{\{u(x,t)\geq M\}}}{|x-y|^{N+sp}}\,dxdydt\nonumber\\
 &\geq \frac{1}{2} \sigma M\left|\left\{u \leq \frac{1}{2} \sigma M\right\} \cap Q_{\rho}(\delta)\right|\left[\frac{\left(\frac{1}{2} M\right)^{p-1} \alpha\left|K_{\rho}\right|}{(4 \rho)^{N+s p}}\right] \\
 &=2^{-(2 N+p+2 s p)} \frac{M^p \alpha \sigma}{\rho^{s p}}\left|\left\{u \leq \frac{1}{2} \sigma M\right\} \cap Q_{\rho}(\delta)\right| . 
\end{align*}
We immediately reach the conclusion by gathering these estimates.
\end{proof}

Finally, we give a lemma that removes the measure theoretical condition at each time level required in the previous lemma and is expressed in terms of local integrals.

\begin{lemma}
\label{lem-3-6}
Let $p>1$ and $u$ be a local weak supersolution to \eqref{1.1} that satisfies $u \geq 0$ in $\mathscr{Q}$. We assume additionally that $u\in L^q_{\rm{loc}}(0,T;L^{sp}_{p-1}(\mathbb{R}^N))$ with $q>p-1$. Suppose constant $M>0$ and the cylinder $(x_0,t_0)+Q_{2\rho}$ is included in $\mathscr{Q}$. Then we can find a constant $\gamma(N,p,s,\Lambda)>1$, such that there holds either
\begin{align*}
\left(\frac{\rho}{R}\right)^{\frac{sp}{p-1}}\mathrm{Tail}(u_-;x_0,R;t_0-\rho^{sp},t_0)>M,
\end{align*}
or
\begin{align*}
\mathop{\mathrm{ess}\inf}_{t_0-\rho^{sp}\leq t\leq t_0}\left|\{u(\cdot, t)\leq M\} \cap K_{\rho}(x_0)\right| \leq \frac{\gamma M^{p-1}}{[u^{p-1}]_{Q_{\rho}}}|K_{\rho}|,
\end{align*}
where $[\cdot]_{Q_{\rho}}$ denotes the integral average on $\left(x_0, t_0\right)+Q_{\rho}$.
\end{lemma}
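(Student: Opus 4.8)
The plan is to use the energy estimate while keeping the nonlocal interaction term on the good side, and then to convert the resulting inequality into one involving the local average $[u^{p-1}]_{Q_\rho}$ by Chebyshev. Assume the first alternative fails, so that $(\rho/R)^{sp/(p-1)}\mathrm{Tail}(u_-;x_0,R;t_0-\rho^{sp},t_0)\le M$. I would apply the Caccioppoli inequality of Lemma~\ref{lem-3-1} to the local weak subsolution $-u$ at level $-2M$ on the cylinder $K_{2\rho}(x_0)\times(t_0-\rho^{sp},t_0)\subset(x_0,t_0)+Q_{2\rho}\subset\mathscr Q$, with a \emph{time-independent} cutoff $\psi=\psi(x)$ such that $\psi\equiv1$ on $K_\rho(x_0)$, $\operatorname{supp}\psi\subset K_{3\rho/2}(x_0)$, $|\nabla\psi|\le 4/\rho$; writing $w:=(-u)-(-2M)=2M-u$, this becomes a Caccioppoli inequality for $(u-2M)_\pm$. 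In contrast with the De Giorgi iterations of Lemmas~\ref{lem-3-2}--\ref{lem-3-3}, the key is to \emph{retain} the nonlocal interaction term on the left-hand side; restricting both space integrations to $K_\rho(x_0)$ (using $\psi\equiv1$ there) it keeps
\[
\mathcal I:=\iint_{Q_\rho}(u-2M)_-(x,t)\int_{K_\rho(x_0)}\frac{(u-2M)_+^{p-1}(y,t)}{|x-y|^{N+sp}}\,dy\,dx\,dt,
\]
exactly as in the proof of Lemma~\ref{lem-3-5}.

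The next step is to bound the right-hand side of the energy estimate by $\gamma M^p\rho^N$. Since $u\ge0$ on the cylinder, Lemma~\ref{lem-2-2} yields $\bm h_-(u,2M)\le\gamma M^p$ and $(u-2M)_-\le 2M$ there; the $\partial_t\psi$-term vanishes because $\psi$ is independent of $t$; and the gradient term together with the initial-slice term $\int_{K_{2\rho}(x_0)\times\{t_0-\rho^{sp}\}}\bm h_-(u,2M)\psi^p\,dx$ are each bounded by $\gamma M^p\rho^N$ by the routine estimates already employed in Lemmas~\ref{lem-3-4}--\ref{lem-3-5}. For the tail term I would split the exterior integral at $\partial K_R(x_0)$: on $K_R(x_0)\setminus K_{2\rho}(x_0)$ one still has $u\ge0$, so $(u-2M)_-^{p-1}\le\gamma M^{p-1}$ and $\int_{\mathbb R^N\setminus K_{2\rho}(x_0)}|y-x_0|^{-N-sp}\,dy\le\gamma\rho^{-sp}$, while on $\mathbb R^N\setminus K_R(x_0)$ one uses $(u-2M)_-^{p-1}(y,t)\le\gamma(M^{p-1}+u_-^{p-1}(y,t))$, Hölder's inequality in time with exponents $q/(p-1)$ and $q/(q-(p-1))$, and the relation between $\mathrm{Tail}$ and $\widetilde{\mathrm{Tail}}$, so that the standing assumption turns this contribution into $\gamma M^p\rho^N$ too — the same computation as in the proof of Lemma~\ref{lem-3-4}. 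Hence $\mathcal I\le\gamma M^p\rho^N$.

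To conclude, in $\mathcal I$ I restrict the $x$-integral to $\{u(\cdot,t)\le M\}$, where $(u-2M)_-\ge M$, the $y$-integral to $\{u(\cdot,t)>4M\}\cap K_\rho(x_0)$, where $(u-2M)_+^{p-1}>(2M)^{p-1}$, and use $|x-y|\le2\rho$; this gives
\[
\int_{t_0-\rho^{sp}}^{t_0}\bigl|\{u(\cdot,t)\le M\}\cap K_\rho\bigr|\,\bigl|\{u(\cdot,t)>4M\}\cap K_\rho\bigr|\,dt\le\gamma|K_\rho|^2\rho^{sp}.
\]
Replacing $|\{u(\cdot,t)\le M\}\cap K_\rho|$ by $\mathop{\mathrm{ess}\inf}_{t_0-\rho^{sp}\le t\le t_0}|\{u(\cdot,t)\le M\}\cap K_\rho|$ and noting $\int_{t_0-\rho^{sp}}^{t_0}|\{u(\cdot,t)>4M\}\cap K_\rho|\,dt=|\{u>4M\}\cap Q_\rho|$, Chebyshev's inequality gives $|\{u>4M\}\cap Q_\rho|\ge(4M)^{-(p-1)}\bigl([u^{p-1}]_{Q_\rho}-(4M)^{p-1}\bigr)|Q_\rho|$. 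If $[u^{p-1}]_{Q_\rho}\ge2(4M)^{p-1}$ this is $\ge\tfrac12(4M)^{-(p-1)}[u^{p-1}]_{Q_\rho}|Q_\rho|$, and since $|Q_\rho|=\rho^{sp}|K_\rho|$ the two displays combine to yield exactly $\mathop{\mathrm{ess}\inf}_{t_0-\rho^{sp}\le t\le t_0}|\{u(\cdot,t)\le M\}\cap K_\rho|\le\gamma M^{p-1}[u^{p-1}]_{Q_\rho}^{-1}|K_\rho|$ after relabelling $\gamma$; if instead $[u^{p-1}]_{Q_\rho}<2(4M)^{p-1}$, this claimed bound exceeds $|K_\rho|$ once $\gamma$ is taken large, so it holds trivially. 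This completes the proof.

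The only genuinely non-routine step is the first one: recognizing that here the nonlocal interaction term of the Caccioppoli inequality must not be discarded but exploited, to manufacture a time-integrated \emph{product} of the measures $|\{u\text{ small}\}|$ and $|\{u\text{ large}\}|$ on $K_\rho(x_0)$. Once that is available, passing to the local average $[u^{p-1}]_{Q_\rho}$ is merely Chebyshev's inequality together with the move from a slice-wise lower bound to its essential infimum; the tail estimates are standard except for the mild care required by $u$ possibly being negative outside $K_R(x_0)$, which is handled by splitting the exterior integral at $\partial K_R(x_0)$.
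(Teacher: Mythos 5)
Your overall strategy is the paper's: apply the Caccioppoli inequality on $K_{2\rho}\times(t_0-\rho^{sp},t_0]$ with a time\nobreakdash-independent cutoff, keep the nonlocal interaction term on the left, bound the right-hand side by $\gamma M^p\rho^N$ under the tail alternative, and extract the essential infimum of $|\{u(\cdot,t)\le M\}\cap K_\rho|$ from the product structure. Up to and including the display
\[
\int_{t_0-\rho^{sp}}^{t_0}\bigl|\{u(\cdot,t)\le M\}\cap K_\rho\bigr|\,\bigl|\{u(\cdot,t)>4M\}\cap K_\rho\bigr|\,dt\le\gamma|K_\rho|^2\rho^{sp},
\]
the argument is sound and matches the paper's proof in all essentials.

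The final step, however, contains a genuine error. You invoke ``Chebyshev's inequality'' to claim
\[
|\{u>4M\}\cap Q_\rho|\ \ge\ (4M)^{-(p-1)}\bigl([u^{p-1}]_{Q_\rho}-(4M)^{p-1}\bigr)|Q_\rho|,
\]
but Chebyshev (Markov) gives an \emph{upper} bound on the measure of a superlevel set, $|\{u>4M\}\cap Q_\rho|\le(4M)^{-(p-1)}\iint_{Q_\rho}u^{p-1}$, never a lower bound. Your claimed inequality is equivalent to $\iint_{Q_\rho}u^{p-1}\le(4M)^{p-1}\bigl(|Q_\rho|+|\{u>4M\}\cap Q_\rho|\bigr)$, which is false: if $u$ equals a huge constant $K$ on a set of tiny measure $\varepsilon$ and is of order $M$ elsewhere, the left side is of order $K^{p-1}\varepsilon$ and can be arbitrarily large while the right side stays bounded. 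The underlying problem is that when you restricted the $y$-integral to $\{u>4M\}$ and replaced $(u-2M)_+^{p-1}$ by the constant $(2M)^{p-1}$, you discarded exactly the information about the size of $u$ that is needed to recover $[u^{p-1}]_{Q_\rho}$; no measure-theoretic manipulation can bring it back afterwards.

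The fix is the paper's route: do not truncate $(u-2M)_+^{p-1}$. Keep the full space--time integral $\iint_{Q_\rho}(u-2M)_+^{p-1}\,dx\,dt$ (pulling only the $(u-2M)_-$ factor out as an essential infimum over $t$ of $\int_{K_\rho}(u-2M)_-\,dy\ge M\,|\{u(\cdot,t)\le M\}\cap K_\rho|$), and bound it below via
\[
\iint_{Q_\rho}(u-2M)_+^{p-1}\,dx\,dt\ \ge\ \gamma^{-1}\iint_{Q_\rho}\bigl(u^{p-1}-(2M)^{p-1}\bigr)\,dx\,dt\ =\ \gamma^{-1}|Q_\rho|\bigl([u^{p-1}]_{Q_\rho}-(2M)^{p-1}\bigr),
\]
where the first inequality can be obtained, e.g., by restricting to $\{u\ge 4M\}$ where $(u-2M)_+\ge u/2$ and absorbing the complementary set into a $-\gamma M^{p-1}|Q_\rho|$ error. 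The resulting $-\gamma M^p|K_\rho|$ term is then absorbed into the right-hand side $\gamma M^p\rho^N$, and the stated estimate follows after adjusting constants. With this replacement your proof coincides with the paper's.
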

\begin{proof}
Let $(x_0,t_0)=(0,0)$. We plan to employ the energy estimate \eqref{3.1} in $K_{2 \rho} \times\left(-\rho^{sp}, 0\right]$ with $k=M$. Let $\psi(x,t)\equiv\psi(x)\in C_0^\infty(K_{3\rho/2})$ be a cutoff function such that $\psi=1$ in $K_\rho$ and $|\nabla \psi|\leq 4 / \rho$. Then it turns into
\begin{align*}
&\quad \iint_{Q_\rho}  (u-M)_-(y,t)\int_{K_{2\rho}} \frac{(u-M)_+^{p-1}(x, t)}{|x-y|^{N+sp}}\,dxdydt\nonumber\\
&\leq  \gamma \int_{-\rho^{sp}}^0 \int_{K_{2 \rho}} \int_{K_{2 \rho}} \max \left\{(u-M)_-^p(x,t),(u-M)_-^p(y,t)\right\} \frac{|\psi(x)-\psi(y)|^p}{|x-y|^{N+s p}}\,dxdydt \nonumber\\
&\quad+\gamma\int_{-\rho^{sp}}^0 \int_{\mathbb{R}^N \backslash K_{2\rho}}\int_{K_{2 \rho}} \frac{(u-M)_-^{p-1}(y,t)}{|x-y|^{N+sp}}(u-M)_-(x,t)\psi^p(x,t)\,dxdt\nonumber\\
&\quad+\int_{K_{2 \rho}}(|u(x,-\rho^{sp})|+M)^{p-2}(u(x,-\rho^{sp})-M)_-^2.
\end{align*}
Analogous to Lemma \ref{lem-3-4}, after enforcing
\begin{align*}
\rho^{\frac{N\kappa}{p-1}}\widetilde{\mathrm{Tail}}(u_-;0,R;-\rho^{sp},0)=\left(\frac{\rho}{R}\right)^{\frac{sp}{p-1}}\mathrm{Tail}(u_-;0,R;-\rho^{sp},0)\leq M,
\end{align*}
we can obtain
\begin{align}
\label{3.19}
\iint_{Q_\rho}  (u-M)_-(y,t)\int_{K_{2\rho}} \frac{(u-M)_+^{p-1}(x, t)}{|x-y|^{N+sp}}\,dxdydt\leq& \gamma M^p |K_{2\rho}|.
\end{align}

On the other hand, we further deduce that
\begin{align}
\label{3.20}
& \quad \iint_{Q_\rho}  (u-M)_-(y,t)\int_{K_{2\rho}} \frac{(u-M)_+^{p-1}(x, t)}{|x-y|^{N+sp}}\,dxdydt\nonumber\\
&\geq\iint_{Q_\rho}  (u-M)_-(y,t)\int_{K_{\rho}} \frac{(u-M)_+^{p-1}(x, t)}{(2\rho)^{N+sp}}\,dxdydt\nonumber\\
&\geq \underset{-\rho^{sp}\leq t\leq 0}{\operatorname{ess} \inf } \int_{K_{\rho}} (u-M)_-(y,t)\iint_{Q_{\rho}} \frac{(u-M)^{p-1}_+(x,t)}{(2\rho)^{N+sp}}\,dxdtdy\nonumber\\
&\geq \gamma\underset{-\rho^{sp}\leq t\leq 0}{\operatorname{ess} \inf } \int_{K_{\rho}} (u-M)_-(y,t)\iint_{Q_{\rho}} \frac{u^{p-1}-M^{p-1}}{(2\rho)^{N+sp}}\,dxdtdy\nonumber\\
&\geq \gamma\underset{-\rho^{sp}\leq t\leq 0}{\operatorname{ess} \inf } \int_{K_{\rho}} (u-M)_-(y,t)\left(\mint\!\!\mint_{Q_{\rho}} u^{p-1} \,dxdt- M^{p-1}\right)\,dy\nonumber \\
&\geq \gamma \underset{-\rho^{sp}\leq t\leq 0}{\operatorname{ess} \inf } \int_{K_{\rho}} (u-M)_{-}(y,t)\,dy\cdot[u^{p-1}]_{Q_{\rho}}-\gamma M^p\left|K_{\rho}\right|\nonumber\\
&\geq \frac{\gamma M}{2}  \underset{-\rho^{sp}\leq t\leq 0 }{\operatorname{ess} \inf }\left|\left\{u(\cdot, t) \leq \frac{1}{2} M\right\}\cap K_{\rho}\right|\cdot[u^{p-1}]_{Q_{\rho}}-\gamma M^p\left|K_{\rho}\right|,
\end{align}
where $\gamma=\gamma(N,s,p)>0$. \eqref{3.19} in conjunction with \eqref{3.20} implies that
\begin{align*}
\underset{-\rho^{sp}\leq t\leq 0}{\operatorname{ess} \inf }\left|\left\{u(\cdot, t) \leq \frac{1}{2} M\right\} \cap K_{\rho}\right| \leq \frac{\gamma M^{p-1}}{[u^{p-1}]_{Q_{\rho}}}\left|K_{\rho}\right| .
\end{align*}
In summary, we arrive at the claim after adjusting some constants.
\end{proof}

\section{Expansion of positivity}
\label{sec4}
This section concerns the expansion of positivity, which is a key ingredient in establishing weak Harnack estimates.
\begin{proposition}
\label{pro-4-1}
Let $p>1$ and $u$ be a local weak supersolution to \eqref{1.1} that satisfies $u \geq 0$ in $\mathscr{Q}$. Assume additionally that $u\in L^q_{\rm{loc}}(0,T;L^{sp}_{p-1}(\mathbb{R}^N))$ with $q>p-1$. Suppose constants $M>0$ and $\alpha\in(0,1]$. If we have
\begin{align}
\label{4.1}
\left|\left\{u(\cdot, t_0) \geq M\right\} \cap K_{\rho}(x_0)\right| \geq \alpha|K_{\rho}|,
\end{align}
then one can find a constant $\eta\in(0,1)$ depending only on $N,p,s,q,\Lambda$ and $\alpha$ such that either
\begin{align*}
\left(\frac{\rho}{R}\right)^{\frac{sp}{p-1}}\mathrm{Tail}\left(u_-;x_0,R;t_0,t_0+4(8 \rho)^{sp}\right)>\eta M,
\end{align*}
or
\begin{align*}
u \geq \eta M \quad \text { a.e. in } K_{2 \rho}\left(x_0\right) \times\left(t_0+\frac{1}{2}(8 \rho)^{sp}, t_0+2(8 \rho)^{sp}\right],
\end{align*}
with imposing $K_{4 \rho}(x_0) \times\left(t_0, t_0+4(8 \rho)^{sp}\right] \subset \mathscr{Q}$. Moreover, we can express $\eta$ as $\eta=\bar{\eta} \alpha^\vartheta$ with some $\bar{\eta}\in(0,1)$ and $\vartheta>1$, both of which depend only on $N,p,s,q,\Lambda$.
\end{proposition}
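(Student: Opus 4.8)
The plan is to chain together the measure-theoretic lemmas from Section~\ref{sec3} in the standard four-step ``expansion of positivity'' scheme, tracking all exponents of $\alpha$ and the parameters carefully so that the quantitative dependence $\eta = \bar\eta\alpha^\vartheta$ comes out at the end. Throughout, I may assume without loss of generality that $(x_0,t_0)=(0,0)$, and at each stage I will be in the alternative where the parabolic tail term $\left(\frac{\rho}{R}\right)^{\frac{sp}{p-1}}\mathrm{Tail}(u_-;\cdot)$ is controlled by the relevant multiple of $M$ (otherwise the first conclusion of the Proposition already holds, after noting that enlarging the time interval over which the tail is taken only increases it).

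\emph{Step 1 (propagation of a measure-density lower bound in time).} Starting from the hypothesis \eqref{4.1} at the single time slice $t_0=0$, apply Lemma~\ref{lem-3-4}: there exist $\delta_1 = \bar\delta\alpha^{N+p+1}$ and $\varepsilon_1 = \tfrac18\alpha$, depending only on the data and $\alpha$, such that either the tail alternative holds or
\[
\left|\{u(\cdot,t)\ge \varepsilon_1 M\}\cap K_\rho\right| \ge \tfrac{\alpha}{2}|K_\rho| \qquad\text{for all } t\in(0,\delta_1\rho^{sp}].
\]
This converts the one-time-slice information into a uniform-in-time measure bound on a short cylinder.

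\emph{Step 2 (measure shrinking).} Now feed this into Lemma~\ref{lem-3-5}, applied with $M$ replaced by $\varepsilon_1 M$, with measure fraction $\tfrac{\alpha}{2}$, height parameter $\delta_1$, and a small parameter $\sigma\in(0,\tfrac12)$ yet to be chosen: provided the tail is $\le \sigma\varepsilon_1 M$, one gets
\[
\left|\{u\le \sigma\varepsilon_1 M\}\cap Q_\rho(\delta_1)\right| \le \gamma\,\frac{\sigma^{p-1}}{\delta_1\,\alpha/2}\,|Q_\rho(\delta_1)|.
\]
Since $p-1>0$, we may choose $\sigma = \sigma(N,p,s,q,\Lambda,\alpha)$ small enough that the right-hand side is $\le \nu|Q_\rho(\delta_1)|$, where $\nu = \bar\nu\delta_1^{l}$ is exactly the threshold from the De Giorgi Lemma~\ref{lem-3-2}; this fixes $\sigma$ as a power of $\alpha$ (roughly $\sigma \sim \alpha^{(l+N+p+2)/(p-1)}$ up to constants).

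\emph{Step 3 (De Giorgi shrinking-of-oscillation at the bottom).} With the smallness-of-measure condition of Lemma~\ref{lem-3-2} now verified for the level $\sigma\varepsilon_1 M$ on the cylinder $Q_\rho(\delta_1)$ (taking $2\rho$ in that lemma as the present $\rho$, so one should really run Steps 1--2 on a ball of radius $2\rho$ and keep the inner radius $\rho$ — a harmless bookkeeping adjustment matching the $K_{4\rho}$ hypothesis), Lemma~\ref{lem-3-2} yields either the tail alternative or
\[
u\ge \tfrac12\sigma\varepsilon_1 M \quad\text{a.e. in } Q_{\rho/2}(\delta_1).
\]
In particular there is a later time slice, say $t=\delta_1\rho^{sp}$ (or any slice inside the cylinder), at which $u\ge cM$ on a full ball $K_{\rho/2}$, i.e.\ we now have a \emph{pointwise} lower bound on a full ball, with a constant $c = c(N,p,s,q,\Lambda,\alpha)$ that is an explicit power of $\alpha$.

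\emph{Step 4 (sideways/forward spreading via Lemma~\ref{lem-3-3}).} A pointwise lower bound $u\ge cM$ on $K_{\rho/2}$ at one time trivially gives the measure condition \eqref{3.7} of Lemma~\ref{lem-3-3} (in fact $|\{u(\cdot,t_1)\le cM\}\cap K_{2r}| = 0 < \nu_0|K_{2r}|$ for the appropriate smaller radius $r$), so Lemma~\ref{lem-3-3} propagates it forward in time and inward in space: either the tail alternative, or $u\ge \tfrac14 cM$ a.e.\ on a cylinder of the form $K_{r/2}\times(t_1+\tfrac34\delta\rho^{sp}, t_1+\delta\rho^{sp}]$. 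Iterating Lemmas~\ref{lem-3-3} and/or~\ref{lem-3-4}--\ref{lem-3-2} a \emph{finite, data-dependent number of times} (the number depends only on $N,p,s$, since each application advances time by a fixed fraction of a cylinder of comparable size and $(8\rho)^{sp}$ is a fixed multiple of $\rho^{sp}$), one reaches the target cylinder $K_{2\rho}\times(\tfrac12(8\rho)^{sp}, 2(8\rho)^{sp}]$ with a final constant $\eta M$. Because each lemma multiplies the constant by a fixed power of $\alpha$ and by a fixed absolute constant, and the number of iterations is bounded in terms of the data only, the composite constant has the claimed form $\eta = \bar\eta\alpha^\vartheta$ with $\vartheta>1$ and $\bar\eta\in(0,1)$ depending only on $N,p,s,q,\Lambda$; the tail thresholds at every stage are all bounded below by a fixed multiple of $\eta M$, so the single tail alternative $\left(\frac{\rho}{R}\right)^{\frac{sp}{p-1}}\mathrm{Tail}(u_-;0,R;0,4(8\rho)^{sp})>\eta M$ absorbs all the intermediate ones (using that the time interval in \eqref{1.2} may only be enlarged, which does not decrease the tail up to the normalizing average — here one must be a little careful with the $\mint$ normalization and reabsorb the resulting fixed multiplicative constant into $\bar\eta$).

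\emph{Main obstacle.} The genuinely delicate point is not any single lemma but the \emph{bookkeeping of the tail term and of the $\alpha$-powers across the chain}. Each of Lemmas~\ref{lem-3-2}--\ref{lem-3-5} comes with its own ``either the tail is large, or \ldots'' dichotomy, stated over a \emph{different} time interval and against a \emph{different} multiple of $M$; to collapse all of these into the single clean dichotomy in the statement one needs that (i) every intermediate time interval is contained in $(t_0,t_0+4(8\rho)^{sp}]$, which dictates the precise geometric choices $\rho\leadsto 2\rho\leadsto 4\rho$ and the fixed fractions of $(8\rho)^{sp}$ appearing in the final cylinder, and (ii) every intermediate threshold $c_i M$ satisfies $c_i \ge c\,\eta$ for an absolute $c$, so that $\mathrm{Tail}\le \eta M$ forces $\mathrm{Tail}\le c_i M$ after adjusting constants — this is where one must verify that the worst (smallest) $c_i$, coming from Step 2's $\sigma$, still degrades only polynomially in $\alpha$ and hence only changes $\vartheta$. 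Getting $\vartheta>1$ rather than merely $\vartheta>0$ requires tracking that at least one stage (the measure-shrinking step, with its $\sigma^{p-1}$ and $\delta_1^{-1}\sim\alpha^{-(N+p+1)}$) contributes a power strictly exceeding one; this is automatic from the explicit exponents recorded in Lemmas~\ref{lem-3-4} and~\ref{lem-3-5}. Everything else is a routine, if lengthy, composition of the quantitative lemmas already proved.
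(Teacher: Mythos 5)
Your proposal is correct and follows essentially the same route as the paper's proof: Lemma~\ref{lem-3-4} to propagate the measure density in time, Lemma~\ref{lem-3-5} with $\sigma$ chosen as an explicit power of $\alpha$ so as to meet the De Giorgi threshold $\nu=\bar\nu\delta^{l}$, Lemma~\ref{lem-3-2} to convert this into a pointwise bound, and then a finite, data-dependent number of further passes to march the positivity forward to times of order $(8\rho)^{sp}$, with every intermediate tail condition absorbed into the single final one over $(t_0,t_0+4(8\rho)^{sp}]$. The one point to fix in your Step~4 is that the paper reruns the full cycle (\ref{lem-3-4})--(\ref{lem-3-5})--(\ref{lem-3-2}) with $\alpha=1$ rather than relying on Lemma~\ref{lem-3-3}, precisely because the cycle returns the pointwise bound on the \emph{same} ball $K_{2\rho}$ (the pointwise bound on $K_{2\rho}$ gives full density on $K_{\rho}$, and the cycle run on $K_{4\rho}$ concludes again on $K_{2\rho}$), whereas iterating Lemma~\ref{lem-3-3} alone would shrink the spatial radius by a fixed factor at each step and could not reach the stated conclusion on $K_{2\rho}$ --- an alternative you do leave open with your ``and/or''.
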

\begin{proof}
For simplicity, let $(x_0,t_0)=(0,0)$. We rewrite \eqref{4.1} as
\begin{align*}
\left|\left\{u(\cdot,0) \geq M\right\} \cap K_{4\rho}\right| \geq 4^{-N} \alpha|K_{\rho}|. 
\end{align*}
According to Lemma \ref{lem-3-4}, if enforcing
\begin{align}
\label{4.2}
\widetilde{\gamma}\left(\frac{\rho}{R}\right)^{\frac{sp}{p-1}}\mathrm{Tail}(u_-;0,R;0,\delta(4\rho)^{sp})\leq M,
\end{align}
with a constant $\widetilde{\gamma}(s,p)>1$, there exist constants $\varepsilon=\frac{1}{8}\alpha$ and $\delta=\bar{\delta}\alpha^{N+p+1}$ with $\bar{\delta}\in(0,1)$ in terms of $N,p,s,\Lambda$ such that
\begin{align*}
%\label{4.3}
\left|\{u(\cdot, t)\geq \varepsilon M\} \cap K_{4 \rho}\right| \geq \frac{\alpha}{2} 4^{-N}\left|K_{4 \rho}\right| \quad \text { for all } t \in\left(0, \delta(4 \rho)^{sp}\right].
\end{align*}
Observe that the following holds true
\begin{align*}
(0,\bar{t})+Q_{4 \rho}\left(\frac{1}{2} \delta\right) \subset K_{4 \rho} \times\left(0,\delta(4 \rho)^{sp}\right]
\end{align*}
for any time
\begin{align}
\label{4.4}
\bar{t} \in\left(\frac{1}{2} \delta(4 \rho)^{sp}, \delta(4 \rho)^{sp}\right].
\end{align}
Consequently, we can utilize Lemma \ref{lem-3-5} in the cylinder $(0, \bar{t})+Q_{4 \rho}\left(\frac{1}{2} \delta\right)$ whenever $\bar{t}$ is in the time interval \eqref{4.4} with $M$ and $\alpha$ replaced by $\varepsilon M$ and $\frac{1}{2}4^{-N}\alpha$. At this stage, we shall select $\sigma$ from Lemma \ref{lem-3-5} such that 
\begin{align*}
\gamma \frac{\sigma^{p-1}}{\delta \alpha} \leq \nu, \quad \text { i.e., } \quad \sigma=\left(\frac{\nu \delta \alpha}{\gamma}\right)^{\frac{1}{p-1}}=\left(\frac{\bar{\nu} \bar{\delta}^{l+1} \alpha^{(N+p+1)(l+1)+1}}{\gamma}\right)^{\frac{1}{p-1}},
\end{align*}
where the constant $\nu=\bar{\nu}\delta^l$ and $\delta=\bar{\delta}\alpha^{N+p+1}$ are fixed in Lemma \ref{lem-3-2} and Lemma \ref{lem-3-4}, respectively. Then we arrive at
\begin{align*}
\left|\{u\leq\sigma\varepsilon M\}\cap(0,\bar{t})+Q_{4\rho}\left(\frac{1}{2}\delta\right)\right|\leq \nu \left|Q_{4\rho}\left(\frac{1}{2}\delta\right)\right|,
\end{align*}
whenever there holds
\begin{align}
\label{4.5}
\widetilde{\gamma}\left(\frac{\rho}{R}\right)^{\frac{sp}{p-1}}\mathrm{Tail}(u_-;0,R;0,\delta(4\rho)^{sp})\leq \sigma\varepsilon M.
\end{align}

Subsequently, let us evoke Lemma \ref{lem-3-2} in the cylinder $(0, \bar{t})+Q_{4 \rho}\left(\frac{1}{2} \delta\right)$ with $M$ replaced by $\sigma\varepsilon M$. Owing to the arbitrariness of $\bar{t}$ in \eqref{4.4}, we get
\begin{align}
\label{4.6}
u \geq \frac{1}{4} \sigma \varepsilon M \quad \text { a.e. in } K_{2 \rho} \times\left(\frac{1}{2} \delta(4 \rho)^{sp}, \delta(4 \rho)^{sp}\right],
\end{align}
where we enforced \eqref{4.5} again.

With the pointwise estimate \eqref{4.6} at hand, we can repeat the preceding process with $\alpha=1$. Let $\eta=\frac{1}{4} \sigma \varepsilon$, it infers that
\begin{align*}
u \geq \bar{\eta} \eta M \quad \text { a.e. in } K_{2 \rho} \times\left(\frac{1}{2} \delta(4 \rho)^{sp}+\frac{1}{2} \bar{\delta}(8 \rho)^{sp}, \delta(4 \rho)^{sp}+\bar{\delta}(8 \rho)^{sp}\right],
\end{align*}
with some constants $\bar{\eta}, \bar{\delta} \in\left(0, \frac{1}{2}\right)$ depending on $N,p,s,q,\Lambda$ and independent of $\alpha$. Here, we need to impose
\begin{align*}
\widetilde{\gamma}\left(\frac{\rho}{R}\right)^{\frac{sp}{p-1}}\mathrm{Tail}\left(u_-;0,R;0,\delta(4 \rho)^{sp}+\bar{\delta}(8 \rho)^{sp}\right)\leq \bar{\eta}\eta M.
\end{align*}
Running the procedure repeatedly, we have for $n\in\mathbb{N}$,
\begin{align}
\label{4.7}
u \geq \bar{\eta}^n \eta M \quad \text { a.e. in } K_{2 \rho} \times\left(\frac{1}{2} \delta(4 \rho)^{sp}+\frac{1}{2} \bar{\delta}(8 \rho)^{sp}, \delta(4 \rho)^{sp}+n \bar{\delta}(8 \rho)^{sp}\right],
\end{align}
provided that 
\begin{align}
\label{4.8}
\widetilde{\gamma}\left(\frac{\rho}{R}\right)^{\frac{sp}{p-1}}\mathrm{Tail}\left(u_-;0,R;0,\delta(4 \rho)^{sp}+n\bar{\delta}(8 \rho)^{sp}\right)\leq \bar{\eta}^n\eta M.
\end{align}
Setting $n=\lceil 2/\bar{\delta}\rceil$, it gives from \eqref{4.7} and $\delta=\bar{\delta}\alpha^{N+p+1}$ that
\begin{align*}
u \geq \bar{\eta}^{\lceil 2 / \bar{\delta}\rceil} \eta M \quad \text { a.e. in } K_{2 \rho} \times\left(\bar{\delta}(8 \rho)^{sp}, (8 \rho)^{sp}\right],
\end{align*}
if \eqref{4.2}, \eqref{4.5} and \eqref{4.8} hold. Considering these requirements together, we only need to ensure
\begin{align*}
\widetilde{\gamma}\left(\frac{\rho}{R}\right)^{\frac{sp}{p-1}}\mathrm{Tail}\left(u_-;0,R;0,4(8 \rho)^{sp}\right)\leq \bar{\eta}^{\lceil 2 / \bar{\delta}\rceil} \eta M,
\end{align*}
with some $\widetilde{\gamma}(s,p)>1$. After redefining some constants, we get the desired result.
\end{proof}

\section{Proof of Theorems \ref{thm-1-2} and  \ref{thm-1-3}}
\label{sec5}
In this section, we are ready to prove the main results on weak Harnack estimates for weak supersolutions based on the expansion of positivity.

\begin{proof}[Proof of Theorem \ref{thm-1-2}]
We may assume $(x_0,t_0)=(0,0)$. Denote
\begin{align*}
\mu=\left(\frac{\rho}{R}\right)^{\frac{sp}{p-1}}\mathrm{Tail}\left(u_-;0,R;0,4(8 \rho)^{sp}\right)+\underset{K_{2 \rho} \times\left(\frac{1}{2}(8 \rho)^{sp}, 2(8 \rho)^{sp}\right]}{\operatorname{ess} \inf } u .
\end{align*}
Letting $\beta>0$ to be chosen, we calculate 
\begin{align}
\label{5.1}
\int_{K_{\rho}} u^{\beta}(\cdot, 0)\,dx & =\beta \int_0^{\infty}\left|\{u(\cdot, 0)>M\} \cap K_{\rho}\right| M^{\beta-1}\,dM \nonumber\\
& \leq \beta \int_{\mu}^{\infty}\left|\{u(\cdot, 0)>M\} \cap K_{\rho}\right| M^{\beta-1}\,dM +\mu^\beta\left|K_{\rho}\right|.
\end{align}
From Proposition \ref{pro-4-1}, there exist constants $\vartheta>1$ and $\eta\in(0,1)$  depending on $N,p,s,q,\Lambda$ such that  
\begin{align*}
\mu \geq \eta M\left(\frac{\left|\{u(\cdot, 0)>M\} \cap K_{\rho}\right|}{\left|K_{\rho}\right|}\right)^\vartheta .
\end{align*}
Thereby, we treat the first part on the right-hand side of \eqref{5.1} as
\begin{align*}
\beta \int_{\mu}^{\infty}\left|\{u(\cdot, 0)>M\} \cap K_{\rho}\right| M^{\beta-1}\,dM \leq \frac{\beta \mu^{\frac{1}{\vartheta}}}{\eta^{\frac{1}{\vartheta}}}\left|K_{\rho}\right| \int_{\mu}^{\infty} M^{\beta-\frac{1}{\vartheta}-1} \,dM.
\end{align*}
Here, we shall let $\beta<\frac{1}{\vartheta}$ to deduce
\begin{align*}
\int_{K_{\rho}} u^{\beta}(\cdot, 0)\,dx\leq \left[1+\frac{\beta}{\left(\beta-\frac{1}{\vartheta}\right) \eta^{\frac{1}{\vartheta}}}\right] \mu^{\beta}\left|K_{\rho}\right|,
\end{align*}
as intended.
\end{proof}

\medskip

\begin{proof}[Proof of Theorem \ref{thm-1-3}]
Let us utilize Lemma \ref{lem-3-6} and select constant $M>0$ such that
\begin{align}
\label{5.2}
\underset{-(2 \rho)^{sp}\leq t\leq 0}{\operatorname{ess} \inf }\left|\{u(\cdot,t) \leq M\} \cap K_{2 \rho}\right| \leq \frac{\gamma M^{p-1}}{[u^{p-1}]_{Q_{2\rho}}}\left|K_{2 \rho}\right| \leq \nu_0\left|K_{2 \rho}\right|,
\end{align}
where $\nu_0$ is determined as in Lemma \ref{lem-3-3}. In other words, \eqref{5.2} amounts to requiring
\begin{align}
\label{5.3}
M=\left(\frac{\nu_0[u^{p-1}]_{Q_{2\rho}}}{\gamma}\right)^\frac{1}{p-1}.
\end{align}
Set $\bar{t} \in\left[-(2 \rho)^{sp}, 0\right]$ as the point where the infimum in \eqref{5.2} is attained. By means of Lemma \ref{lem-3-3}, there exists some $\delta \in\left(0, \frac{1}{4}\right)$ depending only on $N,p,s,q,\Lambda$ such that
\begin{align}
\label{5.4}
u \geq \frac{1}{4} M \quad \text { a.e. in } K_{\frac{1}{2} \rho} \times\left(\bar{t}+\frac{3}{4} \delta \rho^{sp}, \bar{t}+\delta \rho^{sp}\right],
\end{align}
if $K_{2 \rho} \times\left[-(2 \rho)^{sp}, \delta\rho^{sp}\right] \subset \mathscr{Q}$ and
\begin{align}
\label{5.5}
\left(\frac{\rho}{R}\right)^{\frac{sp}{p-1}}\mathrm{Tail}\left(u_-;0,R;-(2 \rho)^{sp}, \delta\rho^{sp}\right) \leq M .
\end{align}
Afterward, the estimate \eqref{5.4} allows us to exploit Proposition \ref{pro-4-1} with $\alpha=1$. As a consequence, we can find a constant $\eta$ depending only on $N,p,s,q,\Lambda$ such that
\begin{align}
\label{5.6}
u \geq \eta M \quad \text { a.e. in } K_{\rho} \times\left(\bar{t}+\frac{3}{4} \delta \rho^{sp}+\frac{1}{2}(4 \rho)^{sp}, \bar{t}+\delta \rho^{sp}+2(4 \rho)^{sp}\right],
\end{align}
if imposing $K_{2 \rho} \times\left(-(2\rho)^{sp}, 4(4 \rho)^{sp}\right] \subset \mathscr{Q}$ and
\begin{align}
\label{5.7}
\left(\frac{\rho}{R}\right)^{\frac{sp}{p-1}}\mathrm{Tail}\left(u_-;0,R;-(2 \rho)^{sp}, 4(4 \rho)^{sp}\right) \leq \eta M .
\end{align}
Indeed, we only need to enforce \eqref{5.7} since $0\leq\eta\leq 1$, and thus \eqref{5.5} holds directly. Finally, it follows from \eqref{5.6} that
\begin{align*}
u \geq \eta M \quad \text { a.e. in } K_{\rho} \times\left(\frac{3}{4}(4 \rho)^{sp},(4 \rho)^{sp}\right],
\end{align*}
whenever $\bar{t} \in\left[-(2\rho)^{sp}, 0\right]$. Theorem \ref{thm-1-3} follows readily by considering \eqref{5.3} and redefining $(\eta \nu_0 /\gamma)^{\frac{1}{p-1}}$ as $\eta$.
\end{proof}

\bigskip

\section*{Acknowledgments}
This work was supported by the National Natural Science Foundation of China (No. 12471128). 

\section*{Declarations}
\subsection*{Conflict of interest} The authors declare that there is no conflict of interest. We also declare that this
manuscript has no associated data.

\subsection*{Data availability} Data sharing is not applicable to this article as no datasets were generated or analysed
during the current study.

\end{document}